\theoremstyle{definition}
\newtheorem{definition}{Definition}[section]
\newtheorem{lemma}[definition]{Lemma}
\newtheorem{theorem}[definition]{Theorem}
\newtheorem{corollary}[definition]{Corollary}
\newtheorem{remark}[definition]{Remark}
\newtheorem{proposition}[definition]{Proposition}
\begin{document}
\title{Convex inequalities and their applications to relative operator entropies}
\author{Shigeru Furuichi$^1$\footnote{E-mail:furuichi.shigeru@nihon-u.ac.jp}, Hamid Reza Moradi$^2$\footnote{hrmoradi@mshdiau.ac.ir} and Supriyo Dutta$^3$\footnote{dosupriyo@gmail.com}\\
$^1${\small Department of Information Science, College of Humanities and Sciences,}\\
{\small Nihon University, Setagaya-ku, Tokyo, 156-8550, Japan}\\
$^2${\small Department of Mathematics, Payame Noor University (PNU),}\\ 
{\small P.O.Box, 19395-4697, Tehran, Iran.}\\
$^3${\small Department of Humanity and Sciences (Mathematics), SR University,}\\
{\small Warangal Urban, Telengana, India - 506371.}}
\date{}
\maketitle
{\bf Abstract.} A considerable amount of literature in the theory of inequality is devoted to the study of Jensen's and Young's inequality. This article presents a number of new inequalities involving the log-convex functions and the geometrically convex functions. As their consequences, we derive the refinements for Young's inequality and Jensen's inequality. In addition, the operator Jensen's type inequality is also developed for conditioned two functions. Utilizing these new inequalities, we investigate the operator inequalities related to the relative operator entropy.
\vspace{3mm}

{\bf Keywords} : Convexity, log-convex, geometrically convex, Jensen's inequality, relative operator entropy, and operator inequality
\vspace{3mm}

{\bf 2020 Mathematics Subject Classification} :  26D15, 26A51, 39B62, 47A63, 47A64, 94A17
\vspace{3mm}

	\section{Introduction}
	
		Throughout this article $I\subset \mathbb{R}$ and $J\subset (0, \infty)$ denote two intervals. A function $f:I \rightarrow \mathbb{R}$ is said to be a convex function, if it satisfies the well-known Jensen's inequality, which is 
		\begin{equation}\label{15}
		f\left( \left( 1-t \right)a+tb \right)\le \left( 1-t \right)f\left( a \right)+tf\left( b \right),
		\end{equation}
		for all $a,b\in I $ and all $0\le t \le 1$. In general, we can express it as
		\begin{equation}\label{JI}
		f\left(\sum\limits_{i=1}^n t_ia_i\right) \le \sum\limits_{i=1}^n t_i f(a_i),
		\end{equation}
		where $a_1,a_2,\cdots,a_n \in I$ and $t_1,t_2,\cdots,t_n \ge 0$ with $\sum\limits_{i=1}^nt_i=1$. The Young's inequality \cite{8} is a consequence of Jensen's inequality, which states that
		\begin{equation}\label{14}
		{{a}^{1-t}}{{b}^{t}}\le \left( 1-t \right)a+tb,
		\end{equation}
		where $a, b >0$ and $0 \leq t \leq 1$. It is generated from inequality (\ref{15}) applying $f\left( x \right)=-\log x$. A function $f:I \rightarrow (0,\infty)$ is described as a log-convex function, if $f\left((1-t)a+tb\right)\le f^{1-t}(a)f^t(b)$ for all $a,b\in I$ and all $0\le t \le 1$. In addition, we mention a function $f:J\to(0,\infty)$ as a geometrically convex function if $f\left(a^{1-t}b^t\right)\le f^{1-t}(a)f^t(b)$ for all $a,b\in I$ and all $0\le t \le 1$.
		
		A self-adjoint operator $A$ is said to be positive, if $\langle Ax, x \rangle \geq 0$ for all $x \in \mathcal{H} $, where $\mathcal{H}$ is a Hilbert space with inner product $\langle ., . \rangle$. We denote the identity operator in $\mathcal{H}$ by ${{\mathbf 1}_{\mathcal H}}$. Given two self-adjoint operators $A$ and $B$ we denote $A \geq B$ ($A > B$) when $A - B$ is positive (strictly positive) operator. Therefore $A \geq 0$ ($A > 0$) indicates $A$ is a positive (strictly positive) operator.  
		
		This article is distributed as follows. Section 2 illustrates our results on log-convex functions and the geometrically convex functions. Theorem \ref{2} and its corollaries present a refinement of Jensen inequality. Theorem \ref{concave_and_convex_lemma} discusses operator monotonicity and concavity. Section 3 presents the applications of the results developed in section 2. Here, we consider the deformed logarithmic function and operator relative entropy. In this section, we construct several operator inequalities applicable in entropy theory. Remark 3.9 presents a refinement of Young's inequality.

	\section{Main results}\label{sec2}
	
		We denote the weighted arithmetic mean and geometric mean of two positive real numbers $a$ and $b$ by $a\nabla_v b:=(1-v)a+vb$ and $a\sharp_vb:=a^{1-v}b^v$, respectively. The first result of this article describes a bound on the ratio of these two quantities, which is as follows:
	
			\begin{proposition}\label{theorem_2.1}
			Given two positive real numbers $a$ and $b$ we have
			\begin{equation}\label{theorem2.1_ineq}
			\exp\left(n\left(1-\left(\frac{a\nabla_v b}{a\sharp_v b}\right)^{-1/n}\right)\right) \le \frac{a\nabla_v b}{a\sharp_v b} \le \exp\left(n\left(\left(\frac{a\nabla_v b}{a\sharp_v b}\right)^{1/n}-1\right)\right),
			\end{equation}
			where $0\le v\le 1$, and $n \in \mathbb{N}$.
		\end{proposition}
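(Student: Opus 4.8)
The plan is to reduce the two-sided estimate in (\ref{theorem2.1_ineq}) to the elementary inequality $1-\frac1y\le\log y\le y-1$, valid for every $y>0$. First I would set
\[
x:=\frac{a\nabla_v b}{a\sharp_v b}.
\]
Since $a,b>0$ we have $a\sharp_v b=a^{1-v}b^{v}>0$ and $a\nabla_v b=(1-v)a+vb>0$, so $x$ is a well-defined positive real number; in fact Young's inequality (\ref{14}) gives $a\sharp_v b\le a\nabla_v b$, hence $x\ge 1$, although positivity of $x$ alone is all the argument needs. Because $t\mapsto\exp(t)$ is increasing, the inequality (\ref{theorem2.1_ineq}) is equivalent, after taking logarithms, to
\[
n\!\left(1-x^{-1/n}\right)\le\log x\le n\!\left(x^{1/n}-1\right).
\]

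The key step is the substitution $y:=x^{1/n}$, which is again a positive real number and satisfies $\log x=n\log y$. Dividing the last display by $n$, the desired chain becomes precisely
\[
1-\frac1y\le\log y\le y-1 .
\]
The right-hand inequality $\log y\le y-1$ holds for all $y>0$; one sees this from the concavity of $\log$ together with its tangent line at $y=1$, or by checking that $y\mapsto y-1-\log y$ has derivative $1-1/y$, hence a global minimum value $0$ attained at $y=1$. The left-hand inequality then follows by applying the right-hand one with $1/y$ in place of $y$: from $\log(1/y)\le 1/y-1$ we get $-\log y\le 1/y-1$, that is, $\log y\ge 1-\frac1y$.

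Combining the two bounds and multiplying back through by $n$ recovers the logarithmic form displayed above, and exponentiating yields (\ref{theorem2.1_ineq}). There is essentially no obstacle in this argument beyond spotting the substitution $y=x^{1/n}$ that absorbs all of the $n$-dependence into the classical estimate for $\log$; the restriction $n\in\mathbb{N}$ plays no real role, since the same proof works for any real $n>0$, and one may further observe that by convexity of $u\mapsto x^{u}$ the two bounds in (\ref{theorem2.1_ineq}) are nested and tighten to $x$ as $n\to\infty$.
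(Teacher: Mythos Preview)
Your proof is correct and follows essentially the same route as the paper: both set $x=\dfrac{a\nabla_v b}{a\sharp_v b}$, reduce to showing $n(1-x^{-1/n})\le\log x\le n(x^{1/n}-1)$, and then use the substitution $x^{1/n}\mapsto t$ to reduce this to the classical estimate $1-\tfrac1t\le\log t\le t-1$. The only cosmetic difference is that the paper derives the latter from the tangent-line inequality for the convex function $-\log$, while you verify it directly by a derivative argument and the substitution $t\mapsto 1/t$.
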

		\begin{proof}
			Recall that, for any differentiable convex function $f$ on the interval $J$, we can write
			\begin{equation}\label{9}
				f'\left( s \right)\left( t-s \right)\le f\left( t \right)-f\left( s \right)\le f'\left( t \right)\left( t-s \right),
			\end{equation}
			for any $t \ge s > 0$. Applying $f\left( x \right)=-\log x$ in the above inequality we can derive that
			\[1-\frac{s}{t}\le \log \frac{t}{s}\le \frac{t}{s}-1\]
			which implies
$$
			1-\frac{1}{t}\le \log t \le t-1, \,\,\,\,(t>1),
$$
			for $s= 1$. In addition, applying $t = 1$ we can deduce that 
$$
			1-\frac{1}{s}\le \log s \le s-1, \,\,\,\,(0<s\le 1).
$$
		Combining we get the well known inequality:
			\begin{equation}\label{8}
			1-\frac{1}{t}\le \log t \le t-1, \,\,\,\,(t>0).
			\end{equation}
			Let $x > 0$. Eliminating $t:=x^{1/n}>0$ in \eqref{8}, we obtain
			\begin{equation}\label{ineq01_proof_theorem2.1}
			n\left( 1-x^{-1/n}\right)\le \log x\le n\left(x^{1/n}-1\right),\,\,\,\,(x>0).
			\end{equation}
			Thus, we have the desired result by replacing $x:=\dfrac{a\nabla_vb}{a\sharp_vb}$ in the above inequality.
		\end{proof}
	
	\begin{remark} 
		Consider two sequences of functions $\{a_n(x)\}$ and $\{b_n(x)\}$ defined by $a_n(x):= n\left(x^{1/n}-1\right)$ and $b_n(x):= n\left(1-x^{-1/n}\right)$, respectively. We observe that $\{a_n\}$ is a monotone decreasing sequence for $x \geq 1$, since the function $f(y):= ny^{n+1} - (n+1)y^n + 1$ where $y = x^{\frac{1}{n(n + 1)}}$ is monotone increasing for $y \geq 1$. Similarly, defining $g(z):= nz^{-n-1} - (n+1)z^{-n} + 1$ where $z:=x^{\frac{1}{n(n+1)}}>0$ we can prove $\{b_n\}$ is a monotone increasing sequence when $x \geq 1$. Moreover, $\lim\limits_{n\to\infty} a_n(x) = \lim\limits_{n\to\infty} b_n(x) =\log x$. Thus, both sides in \eqref{theorem2.1_ineq} converges to $\dfrac{a\nabla_vb}{a\sharp_vb}$ in the limit $n \to \infty$.
	\end{remark}

	\begin{theorem}\label{thm}
		Given a function $f:I\to\mathbb{R}$ and two real numbers $s$ and $t \in I$ with $t > s$ the following inequality holds:
		$$\min\left\{ \frac{\left(f(t)-f(s)\right)^2}{t-s},t-s\right\}\leq |f(t)-f(s)|\leq \max\left\{ \frac{\left(f(t)-f(s)\right)^2}{t-s},t-s\right\}.$$
	\end{theorem}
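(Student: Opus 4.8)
The plan is to reduce the statement to the elementary fact that the geometric mean of two nonnegative numbers always lies between their minimum and their maximum. Put $w := t - s > 0$ and $u := |f(t) - f(s)| \ge 0$. Then the quantity $\dfrac{(f(t)-f(s))^2}{t-s}$ is exactly $\dfrac{u^2}{w}$, which is nonnegative, while $t - s = w$ is positive. So the claim is the purely algebraic assertion that $\min\{u^2/w,\, w\} \le u \le \max\{u^2/w,\, w\}$.

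The key observation is that $u$ is precisely the geometric mean of $\dfrac{u^2}{w}$ and $w$, because
$$\sqrt{\frac{u^2}{w}\cdot w} = \sqrt{u^2} = u,$$
where we used $u \ge 0$. For any two nonnegative reals $p, q$ one has $\min\{p,q\} \le \sqrt{pq} \le \max\{p,q\}$ (if, say, $p \le q$, then $p = \sqrt{p^2} \le \sqrt{pq} \le \sqrt{q^2} = q$). Applying this with $p = \dfrac{u^2}{w}$ and $q = w$ yields exactly
$$\min\left\{\frac{(f(t)-f(s))^2}{t-s},\, t-s\right\} \le |f(t)-f(s)| \le \max\left\{\frac{(f(t)-f(s))^2}{t-s},\, t-s\right\},$$
which is the theorem.

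Equivalently, one may argue directly by cases according to whether $u \le w$ or $u \ge w$: in the first case, multiplying $u \le w$ by the nonnegative number $u/w$ gives $u^2/w \le u$, so the minimum of the two bounds is $u^2/w$ and the maximum is $w$, and both inequalities follow; the second case is symmetric. There is essentially no real obstacle here — the only point deserving a moment's care is the degenerate situation $f(t) = f(s)$, where the left-hand quantity is $\min\{0,\, t-s\} = 0 = |f(t)-f(s)|$ and the chain of inequalities holds trivially.
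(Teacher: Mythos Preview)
Your proof is correct and essentially follows the same route as the paper's. The paper normalizes by setting $\alpha := \dfrac{|f(t)-f(s)|}{t-s}$ and argues by cases ($\alpha \le 1$ versus $\alpha \ge 1$) that $\min\{\alpha^2,1\}\le \alpha \le \max\{\alpha^2,1\}$, then multiplies through by $t-s$; your case split on $u \le w$ versus $u \ge w$ is exactly this after the substitution $\alpha = u/w$. Your geometric-mean observation $u = \sqrt{(u^2/w)\cdot w}$ is a pleasant conceptual repackaging of the same algebra, not a different method.
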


	\begin{proof}
		Given any real number $\alpha \leq 1$ we have $\alpha^2\le \alpha \le 1$, that is $\min\{\alpha^2,1\}=\alpha^2 \leq \alpha \leq 1 = \max\{\alpha^2,1\}$. Similarly, considering $\alpha\geq 1$, we observe $\alpha^2 \geq \alpha \ge 1$, which leads us to $\min\{\alpha^2,1\}=1\le \alpha \leq \alpha^2 =\max\{\alpha^2,1\}$. Combining we get 
		$$\min\{\alpha^2,1\}\leq \alpha\leq \max\{\alpha^2,1\}$$ 
		for all $\alpha\in\mathbb{R}$. Replacing $\alpha:=\dfrac{|f(t)-f(s)|}{t-s}$ in the above inequality we find
		$$\min\left\{ \left(\frac{f(t)-f(s)}{t-s}\right)^2,1\right\}\leq \frac{|f(t)-f(s)|}{t-s}\leq \max\left\{ \left(\frac{f(t)-f(s)}{t-s}\right)^2,1\right\}.$$
		We complete the proof by multiplying $t - s > 0$ with all the terms in the above inequality.
	\end{proof}

	\begin{remark}\label{3}
	\begin{itemize}
		\item[(i)] Consider $f$ is a monotonically increasing function, that is $f(t) > f(s)$ for $t > s$. Hence, Theorem \ref{thm} takes the form
		$$\min\left\{ \frac{\left(f(t)-f(s)\right)^2}{t-s},t-s\right\}\leq f(t)-f(s)\leq \max\left\{ \frac{\left(f(t)-f(s)\right)^2}{t-s},t-s\right\}.$$
		\item[(ii)] Moreover, Theorem \ref{thm} leads us to
		\[\frac{\min \left\{ {{\left( f\left( t \right)-f\left( 0 \right) \right)}^{2}},{{t}^{2}} \right\}}{t}\le f\left( t \right)-f\left( 0 \right)\le \frac{\max \left\{ {{\left( f\left( t \right)-f\left( 0 \right) \right)}^{2}},{{t}^{2}} \right\}}{t},\]
		when $s = 0 \in I$ and $f : I \to \mathbb{R}$ is a monotone increasing function.
		
		\item[(iii)]For $\alpha >0$, $p\ge 1$ and $q \le 1$, we have $\min\{\alpha^p,\alpha^q\}\le \alpha \le \max\{\alpha^p,\alpha^q\}$. Now, given a monotone increasing function $f:I\to \mathbb{R}$ as well as two points $s, t\in I$ with $t > s$, we have
		$$
		\min\left\{\frac{\left(f(t)-f(s)\right)^p}{(t-s)^{p-1}},\frac{\left(f(t)-f(s)\right)^q}{(t-s)^{q-1}}\right\}\le f(t)-f(s)\le \max\left\{\frac{\left(f(t)-f(s)\right)^p}{(t-s)^{p-1}},\frac{\left(f(t)-f(s)\right)^q}{(t-s)^{q-1}}\right\}.
		$$
		Proof is similar to Theorem \ref{thm}. 
		The positiveity of $\alpha$ can be dropped for the case $p=2, q=0$.
	\end{itemize}
	\end{remark}
	
	\begin{corollary}\label{corollary_2.1}
		Consider a convex function $f : J \rightarrow \mathbb{R}$ and $0\le t\le 1$. For ${{x}_{1}},{{x}_{2}},\ldots ,{{x}_{n}}\in J$ and positive real numbers ${{w}_{1}},{{w}_{2}},\ldots ,{{w}_{n}}$ with
		$\sum\limits_{i=1}^{n}{{{w}_{i}}}=1$ we have
		\[f\left( \sum\limits_{i=1}^{n}{{{w}_{i}}{{x}_{i}}} \right)+\frac{{{\psi }}(t)}{t}\le \sum\limits_{i=1}^{n}{{{w}_{i}}f\left( {{x}_{i}} \right)},\]
		where
		\[{{\psi }}(t)=\min \left\{ {{\left( \sum\limits_{i=1}^{n}{{{w}_{i}}f\left( t{{x}_{i}}+\left( 1-t \right)\sum\limits_{j=1}^{n}{{{w}_{j}}{{x}_{j}}} \right)}-f\left( \sum\limits_{i=1}^{n}{{{w}_{i}}{{x}_{i}}} \right) \right)}^{2}},{{t}^{2}} \right\}.\]
	\end{corollary}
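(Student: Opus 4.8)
The plan is to reduce the claimed inequality to the one‑variable convexity of the auxiliary function
$$
g(t) := \sum_{i=1}^{n} w_i\, f\!\left(t x_i + (1-t)\,\overline{x}\right) - f(\overline{x}),
\qquad \overline{x} := \sum_{j=1}^{n} w_j x_j ,\qquad t\in[0,1].
$$
Since $J$ is an interval, $\overline{x}\in J$ and $y_i(t):=t x_i+(1-t)\overline{x}\in J$, so $g$ is well defined, and $\sum_{i=1}^n w_i y_i(t)=\overline{x}$ for every $t$. First I would record three facts. (a) $g(0)=f(\overline{x})-f(\overline{x})=0$. (b) $g(t)\ge 0$ for all $t\in[0,1]$: this is precisely Jensen's inequality \eqref{JI} applied to the points $y_i(t)$ with weights $w_i$; in particular $g(1)=\sum_{i=1}^n w_i f(x_i)-f(\overline{x})\ge 0$. (c) $g$ is convex on $[0,1]$, because each map $t\mapsto f(y_i(t))$ is the composition of the convex function $f$ with an affine map, a nonnegative combination of convex functions is convex, and subtracting a constant does not affect convexity.

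Next I would combine convexity with $g(0)=0$: writing $t=(1-t)\cdot 0+t\cdot 1$ gives
$$
g(t)\le (1-t)g(0)+t\,g(1)=t\,g(1)\qquad(0\le t\le 1).
$$
The other ingredient is the elementary bound $\min\{\alpha,\beta\}\le\sqrt{\alpha\beta}$, valid for $\alpha,\beta\ge 0$. Applying it with $\alpha=g(t)^2$ and $\beta=t^2$ (both nonnegative, since $g(t)\ge 0$), and noting that $\psi(t)$ in the statement is exactly $\min\{g(t)^2,t^2\}$, we obtain $\psi(t)\le\sqrt{g(t)^2 t^2}=t\,g(t)$. Chaining this with the previous display and using $0\le t\le 1$, $g(1)\ge 0$,
$$
\psi(t)\le t\,g(t)\le t\cdot t\,g(1)=t^2 g(1)\le t\,g(1).
$$
Hence, for $t>0$, dividing by $t$ yields $\psi(t)/t\le g(1)=\sum_{i=1}^n w_i f(x_i)-f(\overline{x})$, which is the asserted inequality after rearranging the terms. (For $t=0$ one reads $\psi(0)/0$ as its limit $0$, since $0\le\psi(t)/t\le t\to 0$, and the statement collapses to ordinary Jensen.)

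I do not anticipate a genuine obstacle: once the function $g$ and the identification $\psi(t)=\min\{g(t)^2,t^2\}$ are in place, the rest is routine. The only points deserving care are the verification that $g$ is convex in the single variable $t$ (which legitimizes the key estimate $g(t)\le t\,g(1)$) and the bookkeeping for the degenerate value $t=0$. As an alternative to this direct route, one may first observe that a convex function on $[0,1]$ attaining its minimum at the left endpoint is nondecreasing there—so, by (a) and (b), $g$ is nondecreasing on $[0,1]$—and then invoke Remark \ref{3}(ii) for $g$ to get $\psi(t)/t\le g(t)\le g(1)$; but the self‑contained argument above is shorter.
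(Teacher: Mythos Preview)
Your proof is correct and hinges on the same auxiliary function as the paper's: the paper sets
\[
g(t)=\sum_{i=1}^n w_i\,f\!\left(t x_i+(1-t)\sum_j w_j x_j\right),
\]
observes that $g$ is monotone increasing on $[0,1]$ (citing \cite[Theorem 2.1]{0}), and then applies Remark \ref{3}(ii) to get $\psi(t)/t\le g(t)-g(0)\le g(1)-g(0)$. You work with the shifted $g$, prove \emph{convexity} of $g$ in $t$ directly (composition of a convex function with affine maps), and replace the appeal to Remark \ref{3}(ii) by the equivalent elementary bound $\min\{\alpha,\beta\}\le\sqrt{\alpha\beta}$; the alternative you sketch at the end \emph{is} precisely the paper's route. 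The practical difference is that your argument is self-contained---you derive the needed monotonicity from convexity together with $g(0)=\min g$, rather than quoting an external reference---and along the way you actually obtain the slightly sharper intermediate estimate $\psi(t)\le t^2 g(1)$ before relaxing it to $\psi(t)\le t\,g(1)$.
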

	\begin{proof}
		We can prove that the function 
		\[g\left( t \right)=\sum\limits_{i=1}^{n}{{{w}_{i}}f\left( t{{x}_{i}}+\left( 1-t \right)\sum\limits_{j=1}^{n}{{{w}_{j}}{{x}_{j}}} \right)}\]
		is a monotonically increasing function on the interval $\left[ 0,1 \right]$ \cite[Theorem 2.1]{0}. Moreover,
		\[g\left( 0 \right)=f\left( \sum\limits_{i=1}^{n}{{{w}_{i}}{{x}_{i}}} \right)\text{ and }g\left( 1 \right)=\sum\limits_{i=1}^{n}{{{w}_{i}}f\left( t{{x}_{i}} \right)}.\]
		Therefore, we get the desired result by applying Remark \ref{3} (ii).
	\end{proof}
	
	\begin{remark}
	Corollary \ref{corollary_2.1} leads us to an improvement of the Jensen's inequality. It also assists us to revise the arithmetic-geometric mean inequality. Applying $f(x):=-\log x$ in Corollary \ref{corollary_2.1} we obtain
$$
		0 \le \min\left\{\left(\log \dfrac{A(w_i,x_i)}{G(w_i,x_i\nabla_{1-t}A(w_i,x_i))}\right)^2,t^2\right\}\times\frac{1}{t} \le A(w_i,x_i) -G(w_i,x_i) ,
$$
	where $A$ and $G$ are the weighted arithmetic mean and geometric mean, respectively defined by
	$$
	A(w_i,x_i):=\sum_{i=1}^nw_ix_i,\quad G(w_i,x_i):=\prod_{i=1}^nx_i^{w_i}.
	$$
	\end{remark}

	\begin{theorem}\label{1}
		If $f:J\to(0,\infty)$ be a differentiable log-convex function, then for any $s,t\in J$,
		\begin{equation}\label{theorem3_1st_statement}
		\exp \left( \frac{f'\left( s \right)}{f\left( s \right)}\left( t-s \right) \right)\le \frac{f\left( t \right)}{f\left( s \right)}\le \exp \left( \frac{f'\left( t \right)}{f\left( t \right)}\left( t-s \right) \right).
		\end{equation}
			If $f:J\to(0,\infty)$ is a differentiable log-concave function, then for any $s,t\in J$,
		\begin{equation}\label{theorem3_2nd_statement}
		\exp \left( \frac{f'\left( t \right)}{f\left( t \right)}\left( s-t \right) \right)\le \frac{f\left( t \right)}{f\left( s \right)}\le \exp \left( \frac{f'\left( s \right)}{f\left( s \right)}\left( s-t \right) \right).	
		\end{equation}
	\end{theorem}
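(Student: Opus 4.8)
The plan is to reduce both inequalities to the standard two-sided estimate \eqref{9} for a differentiable convex function, via the substitution $g := \log f$. Since $f$ is positive and differentiable, $g$ is differentiable with $g'(x) = f'(x)/f(x)$, and taking logarithms in the defining inequality shows that $f$ being log-convex (resp.\ log-concave) is exactly $g$ being convex (resp.\ concave) on $J$. I would also note that although \eqref{9} is written for $t \ge s$, it is really the pair of supporting-line inequalities $g(t) \ge g(s) + g'(s)(t-s)$ and $g(s) \ge g(t) + g'(t)(s-t)$, both valid for all $s,t \in J$; so no case split on the order of $s$ and $t$ is needed in order to reach the conclusion "for any $s,t\in J$".

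For the log-convex part I would apply \eqref{9} to the convex function $g$, obtaining $g'(s)(t-s) \le g(t)-g(s) \le g'(t)(t-s)$. Substituting $g'(s)=f'(s)/f(s)$, $g'(t)=f'(t)/f(t)$ and $g(t)-g(s)=\log\bigl(f(t)/f(s)\bigr)$ turns this into
$$\frac{f'(s)}{f(s)}(t-s) \le \log\frac{f(t)}{f(s)} \le \frac{f'(t)}{f(t)}(t-s),$$
and exponentiating (the exponential being increasing) gives \eqref{theorem3_1st_statement}. For the log-concave part, $h := -g = -\log f$ is convex with $h'(x) = -f'(x)/f(x)$; applying \eqref{9} to $h$, using $h(t)-h(s) = -\log\bigl(f(t)/f(s)\bigr)$, and then multiplying the resulting chain by $-1$ to reverse it yields the analogous two-sided bound on $\log\bigl(f(t)/f(s)\bigr)$ but with the two endpoints' derivatives interchanged relative to the convex case; exponentiating produces \eqref{theorem3_2nd_statement}. (Equivalently, one could apply the already-proved first part to $1/f$, which is log-convex when $f$ is log-concave, and then take reciprocals.)

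There is essentially no deep obstacle here: the content is entirely carried by \eqref{9} together with the monotonicity of $\log$ and $\exp$, and the remaining work is bookkeeping. The one place to be attentive is the sign and relabeling in the log-concave case, where one must track carefully that reversing the chain for $h$ swaps which endpoint supplies the lower versus the upper bound and flips the sign of the increment, so that the displayed conclusion matches \eqref{theorem3_2nd_statement} in the intended form.
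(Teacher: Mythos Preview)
Your approach is essentially identical to the paper's: set $g=\log f$, apply the supporting-line inequality \eqref{9} to the convex function $g$ (respectively to $-g$ in the log-concave case), and exponentiate. Your remark that the two inequalities in \eqref{9} are just the tangent-line bounds and hence hold for all $s,t\in J$ without an order assumption is a useful clarification that the paper glosses over.

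One caution on the bookkeeping you rightly flagged: if you carry out your steps for the log-concave case (or the paper's ``apply \eqref{9} to $-\log f$ and swap $s,t$''), what drops out is
\[
\exp\!\Big(\tfrac{f'(t)}{f(t)}(t-s)\Big)\le \frac{f(t)}{f(s)}\le \exp\!\Big(\tfrac{f'(s)}{f(s)}(t-s)\Big),
\]
i.e.\ with $(t-s)$ rather than the $(s-t)$ printed in \eqref{theorem3_2nd_statement}. (A quick check with $f(x)=x$ shows the $(s-t)$ version fails, so this is a typo in the displayed statement rather than a flaw in your argument.) Otherwise your proof matches the paper's.
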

	\begin{proof}
		A function $f: J \rightarrow \mathbb{R}$ is log-convex if $\log f$ is convex. Inequality \eqref{9} indicates
		\[\left( \log f\left( s \right) \right)'\left( t-s \right)\le \log f\left( t \right)-\log f\left( s \right)\le \left( \log f\left( t \right) \right)'\left( t-s \right),\]
		that is
		\[\frac{f'\left( s \right)}{f\left( s \right)}\left( t-s \right)\le \log \frac{f\left( t \right)}{f\left( s \right)}\le \frac{f'\left( t \right)}{f\left( t \right)}\left( t-s \right).\]
		Applying the exponential function we get
		\[\exp \left( \frac{f'\left( s \right)}{f\left( s \right)}\left( t-s \right) \right)f\left( s \right)\le f\left( t \right)\le \exp \left( \frac{f'\left( t \right)}{f\left( t \right)}\left( t-s \right) \right)f\left( s \right).\]
		
		The function $f$ is called log-concave if $-\log f$ is convex \cite[Definition 1.3.1]{NP2018}. Applying the inequality \eqref{9} with $-\log f$ and exchanging $s$ and $t$ we obtain the second statement.
	\end{proof}

	\begin{theorem}\label{2}
		If $f:J\to \left( 0,\infty  \right)$ be a differentiable geometrically convex function, then for any $s,t\in J$,
		\[{{\left( \frac{t}{s} \right)}^{\frac{sf'\left( s \right)}{f\left( s \right)}}}\le \frac{f\left( t \right)}{f\left( s \right)}\le {{\left( \frac{t}{s} \right)}^{\frac{tf'\left( t \right)}{f\left( t \right)}}}.\]
	\end{theorem}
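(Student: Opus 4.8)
The plan is to reduce the statement to Theorem \ref{1} by a logarithmic change of variables, exactly mirroring how Proposition \ref{theorem_2.1} reduced to inequality \eqref{8}. A function $f:J\to(0,\infty)$ is geometrically convex precisely when the function $g(u):=\log f(e^u)$ is convex on $\log J:=\{\log x : x\in J\}$, since the defining inequality $f(a^{1-t}b^t)\le f^{1-t}(a)f^t(b)$ becomes, after writing $a=e^{u}$, $b=e^{w}$ and taking logarithms, $g((1-t)u+tw)\le (1-t)g(u)+tg(w)$. Equivalently, $\log f$ is a log-convex function of the variable $e^u$ in the sense that $h(u):=f(e^u)$ is log-convex on $\log J$. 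So first I would record this characterization, noting that differentiability of $f$ transfers to differentiability of $h$, with $h'(u)=e^u f'(e^u)$.

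Next I would apply Theorem \ref{1} to the differentiable log-convex function $h(u)=f(e^u)$ on the interval $\log J$. Writing $u=\log s$ and $w=\log t$, inequality \eqref{theorem3_1st_statement} for $h$ reads
\[
\exp\!\left(\frac{h'(\log s)}{h(\log s)}\,(\log t-\log s)\right)\le \frac{h(\log t)}{h(\log s)}\le \exp\!\left(\frac{h'(\log t)}{h(\log t)}\,(\log t-\log s)\right).
\]
Now substitute $h(\log s)=f(s)$, $h(\log t)=f(t)$, and $h'(\log s)=s f'(s)$, $h'(\log t)=t f'(t)$, and use $\log t-\log s=\log(t/s)$ together with $\exp(c\log(t/s))=(t/s)^{c}$. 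This yields exactly
\[
\left(\frac{t}{s}\right)^{\frac{s f'(s)}{f(s)}}\le \frac{f(t)}{f(s)}\le \left(\frac{t}{s}\right)^{\frac{t f'(t)}{f(t)}},
\]
which is the claimed inequality.

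The one point requiring minor care is the direction of the inequalities: Theorem \ref{1} as stated assumes $t\ge s$ implicitly through \eqref{9} (which is stated for $t\ge s$), yet the conclusion is written for all $s,t\in J$. If $t<s$ then $\log(t/s)<0$ and both the roles of $s,t$ and the sign of the exponent flip consistently, so the two-sided inequality remains valid; I would either invoke Theorem \ref{1} in the symmetric form already claimed there, or simply remark that swapping $s\leftrightarrow t$ reverses every inequality and leaves the displayed chain intact. I do not expect any real obstacle here — the whole proof is the substitution $x\mapsto e^u$ — but the cleanest exposition is to isolate the lemma ``$f$ geometrically convex $\iff$ $u\mapsto f(e^u)$ log-convex'' first, so that Theorem \ref{1} can be quoted verbatim.

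Alternatively, and perhaps even shorter, one can bypass Theorem \ref{1} and argue directly from \eqref{9}: geometric convexity of $f$ means $\phi(u):=\log f(e^u)$ is convex, so \eqref{9} applied to $\phi$ at the points $\log s,\log t$ gives $\phi'(\log s)(\log t-\log s)\le \phi(\log t)-\phi(\log s)\le \phi'(\log t)(\log t-\log s)$; computing $\phi'(u)=\dfrac{e^u f'(e^u)}{f(e^u)}$, exponentiating, and translating back to $s,t$ produces the result in one line. I would present whichever version is more consistent with the surrounding narrative, but would lead with the change-of-variables identity as the conceptual core.
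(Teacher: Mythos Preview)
Your proposal is correct and follows essentially the same route as the paper: both recognize that $f$ is geometrically convex iff $u\mapsto \log f(e^u)$ is convex, apply the tangent-line inequality \eqref{9} to this composite, compute the derivative $\phi'(u)=e^u f'(e^u)/f(e^u)$, and then substitute $u=\log s$, $w=\log t$ and exponentiate. The paper presents only your ``alternative'' direct application of \eqref{9} (not the detour through Theorem~\ref{1}), and it does not comment on the $s\leftrightarrow t$ symmetry, but the argument is otherwise identical.
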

	\begin{proof}
		The function $f: J \rightarrow \mathbb{R}$ is geometrically convex if and only if $\log f\left( \exp\left( x \right) \right)$ is convex on $\mathbb{R}$. Hence, from \eqref{9}, we infer that
		\[\left( \log f\left( \exp s \right) \right)'\left( t-s \right)\le \log \frac{f\left( \exp t \right)}{f\left( \exp s \right)}\le \left( \log f\left( \exp t \right) \right)'\left( t-s \right).\]
		We see that
		\[\frac{(\exp s)f'\left( \exp s \right)}{f\left( \exp s \right)}\left( t-s \right)\le \log \frac{f\left( \exp t \right)}{f\left( \exp s \right)}\le \frac{(\exp t)f'\left( \exp t \right)}{f\left( \exp t \right)}\left( t-s \right).\]
		Replacing $s$ and $t$ by $\log s$ and $\log t$ respectively, we get the required result.
	\end{proof}

	\begin{corollary}\label{corollary_9}
		Let $g:J\to(0,\infty)$ be a differentiable geometrically convex function and $a,b\in J$. Then for any $0\le t\le 1$,
		\begin{equation}\label{theorem_9_eq01}
		\exp \left( G_0(g;a,b)t \right)\le \frac{g\left( {{a}^{1-t}}{{b}^{t}} \right)}{{{g}^{1-t}}\left( a \right){{g}^{t}}\left( b \right)} \le \exp \left( G_t(g;a,b)t \right),
		\end{equation}
		where 
		$$
		G_t(g;a,b):=\log\frac{g(a)}{g(b)}-a^{1-t}b^t\left(\log \frac{a}{b}\right)\frac{g'(a^{1-t}b^t)}{g(a^{1-t}b^t)}.
		$$
	\end{corollary}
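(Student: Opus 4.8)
The plan is to derive this directly from Theorem~\ref{2}. The trivial case $a=b$ is immediate: then $a^{1-t}b^{t}=a=b$, both $G_0(g;a,b)$ and $G_t(g;a,b)$ vanish, and \eqref{theorem_9_eq01} reads $1\le 1\le 1$. So assume $a\neq b$; note also that $a^{1-t}b^{t}$ lies between $a$ and $b$, hence in the interval $J$, so Theorem~\ref{2} may legitimately be applied at this point.

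Next I would apply Theorem~\ref{2} to the geometrically convex function $g$ with its two variables set equal to $a$ and $a^{1-t}b^{t}$; the conclusion of Theorem~\ref{2} is unchanged under interchanging its two arguments (interchange and take reciprocals), so no ordering between them is required. This gives
\[
\left(\frac{a^{1-t}b^{t}}{a}\right)^{\frac{a g'(a)}{g(a)}}\le \frac{g(a^{1-t}b^{t})}{g(a)}\le \left(\frac{a^{1-t}b^{t}}{a}\right)^{\frac{a^{1-t}b^{t}\, g'(a^{1-t}b^{t})}{g(a^{1-t}b^{t})}}.
\]
Since $\dfrac{a^{1-t}b^{t}}{a}=\left(\dfrac{b}{a}\right)^{t}$, taking logarithms turns this into a two-sided estimate for $\log g(a^{1-t}b^{t})-\log g(a)$ whose outer terms carry the factor $-t\log\dfrac{a}{b}$.

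Then I would add $t\log\dfrac{g(a)}{g(b)}$ to all three sides. The middle term becomes exactly $\log g(a^{1-t}b^{t})-(1-t)\log g(a)-t\log g(b)=\log\dfrac{g(a^{1-t}b^{t})}{g^{1-t}(a)g^{t}(b)}$, while the left- and right-hand sides reorganise, upon recognising the definition of $G_t(g;a,b)$ evaluated at $0$ and at the given $t$, into $t\,G_0(g;a,b)$ and $t\,G_t(g;a,b)$ respectively. Exponentiating then yields \eqref{theorem_9_eq01}.

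Everything here is routine once the set-up is fixed; the only place that needs a little care is the final bookkeeping — confirming that after dividing the outer terms by $t$ they collapse precisely to the quantities $\log\frac{g(a)}{g(b)}-a^{1-t}b^{t}\bigl(\log\frac{a}{b}\bigr)\frac{g'(a^{1-t}b^{t})}{g(a^{1-t}b^{t})}$ and its $t=0$ specialisation that define $G_t$ and $G_0$. Alternatively, one can avoid invoking Theorem~\ref{2} and argue intrinsically: the map $\psi(t):=\log g(a^{1-t}b^{t})-(1-t)\log g(a)-t\log g(b)$ is differentiable and convex on $[0,1]$ (it is $\log g\circ\exp$ precomposed with an affine map, minus an affine function), with $\psi(0)=0$, $\psi'(0)=G_0(g;a,b)$ and $\psi'(t)=G_t(g;a,b)$; applying inequality \eqref{9} to $\psi$ between $0$ and $t$ and exponentiating gives the same conclusion.
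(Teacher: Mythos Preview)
Your proposal is correct. Your primary route applies Theorem~\ref{2} directly to $g$ at the points $a$ and $a^{1-t}b^{t}$ and then adds $t\log\frac{g(a)}{g(b)}$; this is not how the paper proceeds, but the bookkeeping you outline does go through exactly as claimed. The paper instead defines the auxiliary function $f(t)=\dfrac{g(a^{1-t}b^{t})}{g^{1-t}(a)g^{t}(b)}$, checks that it is log-convex (via mid-point log-convexity from the geometric convexity of $g$), computes $\dfrac{f'(t)}{f(t)}=G_t(g;a,b)$, and then invokes Theorem~\ref{1} with $s=0$. Your alternative argument at the end --- defining $\psi(t)=\log f(t)$, observing it is convex with $\psi(0)=0$ and $\psi'(t)=G_t$, and applying \eqref{9} --- is exactly the paper's proof rephrased in terms of $\psi$ rather than $f$. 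The main difference between the two routes is that yours never needs to name or analyse the auxiliary ratio $f(t)$ as a separate object, while the paper's approach makes the log-convexity of that ratio an explicit intermediate lemma; both amount to the same single application of \eqref{9}.
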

	\begin{proof}
		If $g$ is a geometrically convex function, then $f\left( t \right)=\dfrac{g\left( {{a}^{1-t}}{{b}^{t}} \right)}{{{g}^{1-t}}\left( a \right){{g}^{t}}\left( b \right)}$ is a log-convex function. Indeed we calculate for $0\le s,t\le 1$,
$$
		f\left(\dfrac{s+t}{2}\right)=\frac{g\left(a^{1-\frac{s+t}{2}}b^{\frac{s+t}{2}}\right)}{g^{1-\frac{s+t}{2}}(a)g^{\frac{s+t}{2}}(b)}= \frac{g\left(\sqrt{a^{1-t}b^t}\sqrt{a^{1-s}b^s}\right)}{\sqrt{g^{1-t}(a)g^t(b)}\sqrt{g^{1-s}(a)g^s(b)}}\le \frac{\sqrt{g\left(a^{1-t}b^t\right)}\sqrt{g\left(a^{1-s}b^s\right)}}{\sqrt{g^{1-t}(a)g^t(b)}\sqrt{g^{1-s}(a)g^s(b)}}=\sqrt{f(t)f(s)}.
$$
		Also, $\dfrac{f'(t)}{f(t)}=G_t(g;a,b)$. Taking $s=0$ as $f(0)=1$, we obtain the result Theorem \ref{1}.
	\end{proof}
	
	\begin{remark}
If the function $g:I\to (0,\infty)$ is differentiable log-convex  and monotone increasing, then also we can prove the inequality \eqref{theorem_9_eq01}.
	\end{remark}
%
%
%
	
	\begin{corollary}\label{cor_211}
		Let ${{a}_{1}},{{a}_{2}},\ldots ,{{a}_{n}}\in J$, ${{t}_{1}},{{t}_{2}},\ldots ,{{t}_{n}}$ be non-negative real numbers, such that, $\sum\limits_{i=1}^{n}{{{t}_{i}}}=1$. Denote $A(t_i,a_i):=\sum\limits_{i=1}^nt_ia_i$. Now, we have the following inequalities:
		\begin{itemize}
		\item[(i)]
		If $f:J\to(0,\infty)$ is a differentiable log-convex function, then
\begin{equation}\label{corollary6_eq01}
		L_nf\left(\sum\limits_{i=1}^n t_ia_i\right) \le \sum\limits_{i=1}^n t_i f(a_i) \le R_nf\left(\sum\limits_{i=1}^n t_ia_i\right),
\end{equation}
		where
		$$
		L_n:=\sum_{i=1}^n t_i \exp\left(\frac{f'\left(A(t_i,a_i)\right)\left(a_i-A(t_i,a_i)\right)}{f\left(A(t_i,a_i)\right) }\right)
		\,\,\, {\rm and}\,\,\,\,\,
		R_n:=\sum\limits_{i=1}^nt_i\exp\left(\frac{f'(a_i)\left(a_i-A(t_i,a_i)\right)}{f(a_i)}\right).
		$$
		\item[(ii)] If $f:J\to(0,\infty)$ is a differentiable geometrically convex function, then we also have
\begin{equation}\label{corollary6_eq02}
		\hat{L_n}f\left(\sum\limits_{i=1}^n t_ia_i\right) \le \sum\limits_{i=1}^n t_i f(a_i) \le \hat{R_n}f\left(\sum\limits_{i=1}^n t_ia_i\right),
\end{equation}
		where
		$$
		\hat{L_n}:=\sum\limits_{i=1}^nt_i\left(\frac{a_i}{A(t_i,a_i)}\right)^{\frac{A(t_i,a_i)f'(A(t_i,a_i))}{f(A(t_i,a_i))}}
		\,\,\, {\rm and}\,\,\,\,\,
		 \hat{R_n}:=\sum\limits_{i=1}^nt_i\left(\frac{a_i}{A(t_i,a_i)}\right)^{\frac{a_if'(a_i)}{f(a_i)}}.
		$$
		\end{itemize}
	\end{corollary}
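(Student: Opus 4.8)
The plan is to derive both parts from the two–point inequalities already established, by specializing one argument to the weighted arithmetic mean and the other to an individual data point, and then averaging with the weights $t_i$. Throughout write $A:=A(t_i,a_i)=\sum_{j=1}^n t_j a_j$; since $J$ is an interval and $A$ is a convex combination of $a_1,\dots,a_n\in J$, we have $A\in J$, so Theorems \ref{1} and \ref{2} may be applied at the points $A$ and $a_i$. (One should note in passing that the estimate \eqref{9}, and hence Theorems \ref{1} and \ref{2}, is valid for arbitrary $s,t$ in the relevant interval, not only for $t\ge s$: for a differentiable convex function the tangent lines at $s$ and at $t$ lie below the graph irrespective of the order of $s$ and $t$. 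This is what makes the substitution $s=A$, $t=a_i$ legitimate even when $a_i<A$.)

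For part (i), fix $i\in\{1,\dots,n\}$ and apply Theorem \ref{1} (log-convex case) with $(s,t)=(A,a_i)$, so that $t-s=a_i-A$; this yields
$$\exp\!\left(\frac{f'(A)}{f(A)}(a_i-A)\right)\le\frac{f(a_i)}{f(A)}\le\exp\!\left(\frac{f'(a_i)}{f(a_i)}(a_i-A)\right).$$
Multiplying through by $f(A)>0$, then by $t_i\ge 0$, and summing over $i=1,\dots,n$, the middle term becomes $\sum_{i=1}^n t_i f(a_i)$, the left-hand side becomes $f(A)\,L_n$ and the right-hand side becomes $f(A)\,R_n$, which is precisely \eqref{corollary6_eq01}.

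For part (ii) the argument is the same with Theorem \ref{2} (geometrically convex case) in place of Theorem \ref{1}; here $a_i/A>0$ since $J\subset(0,\infty)$, so the powers below are well defined. Taking $(s,t)=(A,a_i)$ gives
$$\left(\frac{a_i}{A}\right)^{\frac{Af'(A)}{f(A)}}\le\frac{f(a_i)}{f(A)}\le\left(\frac{a_i}{A}\right)^{\frac{a_if'(a_i)}{f(a_i)}},$$
and multiplying by $f(A)$, then by $t_i$, then summing over $i$ produces $f(A)\,\hat{L_n}\le\sum_{i=1}^n t_i f(a_i)\le f(A)\,\hat{R_n}$, i.e.\ \eqref{corollary6_eq02}.

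I do not expect any genuine obstacle here: the content is entirely in Theorems \ref{1} and \ref{2}, and the only matters requiring a word of care are (a) verifying $A(t_i,a_i)\in J$ so those theorems apply, (b) observing that they hold for arbitrary $s,t\in J$ rather than only for $t>s$, and (c) keeping the direction of the inequalities straight when multiplying by the positive quantity $f(A)$ and the nonnegative weights $t_i$. One may optionally append a remark that, because $\sum_{i=1}^n t_i(a_i-A)=0$, both $L_n$ and $R_n$ (and $\hat{L_n},\hat{R_n}$) tend to $1$ as the $a_i$ collapse to a common point, so \eqref{corollary6_eq01} and \eqref{corollary6_eq02} indeed refine the corresponding Jensen-type inequality for $f$.
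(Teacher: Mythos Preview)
Your proof is correct and follows essentially the same route as the paper: apply Theorem~\ref{1} (resp.\ Theorem~\ref{2}) with $s=A(t_i,a_i)$ and $t=a_i$, multiply by $t_i$, and sum. The paper carries the generic point $s$ through the summation and substitutes $s=A(t_i,a_i)$ at the end, whereas you specialize $s$ first; this is a cosmetic difference only. Your added remarks---that $A\in J$ by convexity of $J$, and that the two-point inequalities of Theorems~\ref{1} and~\ref{2} hold without the restriction $t>s$---are worthwhile clarifications that the paper leaves implicit.
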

	\begin{proof}
		Replacing $t = {{a}_{i}}$ $\left( i=1,2,\ldots ,n \right)$ in Theorem \ref{1}, we write
		\begin{equation}\label{6}
		\exp \left( \frac{f'\left( s\right)}{f\left(s \right)}\left({{a}_{i}}-s \right) \right)\le \frac{f\left( a_i \right)}{f\left( s \right)}\le \exp \left( \frac{f'\left( a_i \right)}{f\left( a_i \right)}\left({{a}_{i}} -s\right) \right)
		\end{equation}
		for any $i=1,2,\ldots ,n$. Multiplying ${{t}_{i}}$ to both sides in \eqref{6}  and adding over $i$ from $1$ to $n$ we deduce
		\begin{equation}\label{13}
		\sum\limits_{i=1}^{n}{{{t}_{i}}\exp \left( \frac{f'\left( s \right)}{f\left( s\right)}\left( {{a}_{i}}-s \right) \right)}\le \frac{\sum\limits_{i=1}^{n}{{{t}_{i}}f\left( {{a}_{i}} \right)} }{f\left( s \right)}\le \sum\limits_{i=1}^{n}{{{t}_{i}}\exp \left( \frac{f'\left( a_i \right)}{f\left( a_i \right)}\left( {{a}_{i}} -s\right) \right)}.
		\end{equation}
		Putting $s=A(t_i,a_i)$ in \eqref{13}, then we infer the result (i). Similarly, we can prove result (ii) by substituting  $s=A(t_i,a_i)$ and $t=a_i$  in Theorem \ref{1}.
	\end{proof}
	
	\begin{remark}
	A  log-convex function $f:I\to(0,\infty)$ is always a convex function which satisfies the Jensen's inequality mentioned in \eqref{JI}. Now, we have the following observations:
	\begin{itemize}
	\item[(i)] Since $e^{\alpha} >0$ for all $\alpha \in \mathbb{R}$, we observe $L_n>0$ and $R_n>0$ in corollary \ref{cor_211}. Note that, $e^{\alpha}\ge 1$ if and only if  $\alpha \ge 0$. Thus, the first inequality in \eqref{corollary6_eq01} is a refined Jensen's inequality when $L_n\ge 1$. The condition $f'(A(t_i,a_i))(a_i-A(t_i,a_i))\ge 0$ for all $i=1,2,\cdots,n$ is a strong sufficient condition for $L_n \ge 1$. The sign of $a_i-A(t_i,a_i)$ for each $i$ is not determinate, since $A(t_i,a_i)$ is an arithmetic mean. There are examples of the log-convex function $f$ such that $f'(x)\ge 0$ or $f'(x)\le 0$. The function $f(x) = \exp({x^p})$ for $x>0$ and $p\ge 1$ is log-convex and $f'(x) > 0$. Also, the function $f(x) = \frac{1}{x^p}$ for $x>0$ and $p>0$ is another log-convex with $f'(x) < 0$. 
	The second inequality in  \eqref{corollary6_eq01} is a reverse of Jensen's inequality. 
	
	\item[(ii)] 
	A differentiable monotonically decreasing geometrically convex function $f:J\to(0,\infty)$ is a convex function, since 
	$$
	f\left((1-t)x+ty\right)\le f(x^{1-t}y^t) \le f^{1-t}(x)f^t(y)\le (1-t)f(x)+tf(y).
	$$
	Therefore the first inequality in \eqref{corollary6_eq02} provides a refinement of the Jensen's inequality for this class of functions. A typical example is $f(x) = \dfrac{1}{\sin x}$ in $(0,\pi/2)$. Clearly, $f'(x) = -\dfrac{\cos x}{\sin^2x} \le 0$ and 
	$$\frac{d^2}{dx^2}\log \left(f(e^x)\right)=\frac{e^x\left(e^x-\frac{1}{2}\sin 2x\right)}{\sin^2x} > 0$$ in $(0,\pi/2)$, since $e^x> 1$ and $\sin 2x \le 1$ in $(0,\pi/2)$.
	\end{itemize}
	\end{remark}

	Before closing this section, we state an interesting theorem that illustrates an inequality by monotonicity and convexity/concavity. We need the following well-known lemma as Jensen's operator inequality for a convex function.
	\begin{lemma}{\bf (\cite{mond1995jensen},\cite[Theorem 1.2]{pecaric2005mond})}\label{unproved_lemma}
		Let $f$ be a concave function on $[a, b]$ where $a \geq 0$ and $A$ be a self-adjoint operator with its spectrum in $[a, b]$. Then we have
		$$\langle f(A)x, x \rangle \leq f \left(\langle Ax, x \rangle\right)$$
		for every unit vector $x\in {\cal H}$. If $f$ is a convex function, then the revered inequality above holds.
	\end{lemma}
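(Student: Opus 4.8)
The plan is to treat the concave case and deduce the convex case by applying it to $-f$. Fix a unit vector $x\in\mathcal H$ and set $c:=\langle Ax,x\rangle$. Since the spectrum of $A$ lies in $[a,b]$ we have $a\,{\mathbf 1}_{\mathcal H}\le A\le b\,{\mathbf 1}_{\mathcal H}$, hence $a\le c\le b$, so $c\in[a,b]$ and $f(c)$ makes sense. The key tool is the classical supporting-line property of concave functions: at the point $c$ there is a real slope $\lambda$ (for $c$ interior one may take any number between the one-sided derivatives $f'_-(c)$ and $f'_+(c)$) with
\[
f(t)\le f(c)+\lambda\,(t-c)\qquad\text{for all }t\in[a,b].
\]

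Because $\mathrm{sp}(A)\subseteq[a,b]$, I would feed this scalar inequality into the (positive) continuous functional calculus of $A$ to get the operator inequality $f(A)\le f(c)\,{\mathbf 1}_{\mathcal H}+\lambda\bigl(A-c\,{\mathbf 1}_{\mathcal H}\bigr)$. Pairing with the unit vector $x$ and using $\langle Ax,x\rangle=c$ then yields
\[
\langle f(A)x,x\rangle\le f(c)+\lambda\bigl(\langle Ax,x\rangle-c\bigr)=f(c)=f\bigl(\langle Ax,x\rangle\bigr),
\]
which is exactly the claim. The convex case follows verbatim with $-f$ in place of $f$, reversing the inequality.

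The one delicate point — and the only real obstacle — is the case where $c=\langle Ax,x\rangle$ is an endpoint of $[a,b]$, since then the supporting slope $\lambda$ may be infinite (the right derivative of a concave function at the left endpoint can equal $+\infty$). I would dispose of this directly: if, say, $c=a$, then $\langle(A-a\,{\mathbf 1}_{\mathcal H})x,x\rangle=0$ with $A-a\,{\mathbf 1}_{\mathcal H}\ge0$, so $(A-a\,{\mathbf 1}_{\mathcal H})^{1/2}x=0$, hence $Ax=ax$; applying the functional calculus to this eigenvector gives $f(A)x=f(a)x$, and the asserted inequality holds with equality. The same argument works at $c=b$. Thus the substantive content reduces to the supporting-line inequality on the open interval $(a,b)$, which is immediate from the monotonicity of difference quotients of a concave function (so no regularity beyond the usual assumption that $f$ is continuous on $\mathrm{sp}(A)$ is needed). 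An equally clean alternative would be to introduce the probability measure $\mu(\cdot):=\langle E_A(\cdot)x,x\rangle$ coming from the spectral resolution $E_A$ of $A$, note that $\langle f(A)x,x\rangle=\int_{[a,b]}f\,d\mu$ and $\langle Ax,x\rangle=\int_{[a,b]}t\,d\mu$, and invoke the classical Jensen inequality $\int f\,d\mu\le f\bigl(\int t\,d\mu\bigr)$ for concave $f$.
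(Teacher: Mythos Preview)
The paper does not supply its own proof of this lemma; it is stated as a known result and attributed to \cite{mond1995jensen} and \cite[Theorem~1.2]{pecaric2005mond}. Your argument is correct and is essentially the standard proof one finds in those references: pick a supporting line to the concave function at $c=\langle Ax,x\rangle$, lift the resulting affine majorant to an operator inequality via the functional calculus, and pair with $x$ so that the linear term vanishes. Your treatment of the endpoint case $c\in\{a,b\}$ (forcing $x$ to be an eigenvector) and the alternative via the spectral measure $\mu=\langle E_A(\cdot)x,x\rangle$ with classical Jensen are both fine and standard.
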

		\begin{theorem}\label{concave_and_convex_lemma}
			Let $a\ge 0$, $f:[a,b]\to [0,\infty)$ and $g:[a,b]\to [0,\infty)$ be differentiable functions satisfying the following (i)-(iv):
			\begin{enumerate}
				\item[(i)] 
					$f$ is monotone increasing and concave;
				\item[(ii)]
					$g$ is convex;
				\item[(iii)]
					$f(x) \geq g(x)$ for all $x \in [a, b]$; and 
				\item[(iv)]
					$f(b) - f(a) \geq {\rm M_{ratio}}(f,g) (g(b) - g(a)) \geq 0$, where ${\rm M_{ratio}}(f,g):={{\max\limits_{x \in [a, b]}\{f(x)\}} \mathord{\left/
 {\vphantom {{\max\limits_{x \in [a, b]}\{f(x)\}} {\min\limits_{x \in [a, b]}\{g(x)\}}}} \right.
 \kern-\nulldelimiterspace} {\min\limits_{x \in [a, b]}\{g(x)\}}}.$
			\end{enumerate}
			Then, for any positive operator $A$ with $a{{\mathbf 1}_{\mathcal H}} \leq A \leq b{{\mathbf 1}_{\mathcal H}}$, and a unit vector $h \in {\cal H}$ we have
			$$(g(b) - g(a)) f(\langle Ah , h \rangle ) \leq (f(b) - f(a)) \langle g(A)h, h \rangle.$$  
		\end{theorem}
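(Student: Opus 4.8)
The plan is to strip the operator out of the problem with a single use of Lemma~\ref{unproved_lemma} applied to $g$, and then to finish with an elementary scalar estimate driven by hypothesis (iv). Put $m := \langle Ah, h\rangle$; since $a{{\mathbf 1}_{\mathcal H}} \le A \le b{{\mathbf 1}_{\mathcal H}}$ and $\|h\| = 1$, we have $m \in [a,b]$. Because $g$ is convex, the reversed form of Lemma~\ref{unproved_lemma} gives
\[ \langle g(A)h, h\rangle \;\ge\; g(\langle Ah,h\rangle) \;=\; g(m) \;\ge\; 0, \]
and $f(b) - f(a) \ge 0$ by the monotonicity of $f$ (or by (iv), since $g(b)-g(a) \ge 0$). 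Multiplying the displayed inequality by $f(b)-f(a)$, we see it suffices to prove the purely scalar inequality
\[ (g(b)-g(a))\, f(m) \;\le\; (f(b)-f(a))\, g(m); \]
indeed, chaining then gives $(g(b)-g(a)) f(m) \le (f(b)-f(a)) g(m) \le (f(b)-f(a)) \langle g(A)h,h\rangle$.

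For the scalar inequality I would argue by cases on the sign of $g(b)-g(a)$. If $g(b) = g(a)$, the left side is $0$ and the right side is nonnegative, so there is nothing to do. If $g(b) > g(a)$, I would first note that (iv) forces $\min_{x\in[a,b]} g(x) > 0$: were this minimum $0$, then ${\rm M_{ratio}}(f,g)\,(g(b)-g(a))$ would be $+\infty$ unless $f \equiv 0$, and in the latter case the conclusion of the theorem already reads $0 \le 0$. Hence $g(m) \ge \min_{[a,b]} g > 0$, and the scalar inequality is equivalent to $\dfrac{f(m)}{g(m)} \le \dfrac{f(b)-f(a)}{g(b)-g(a)}$. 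Now $\dfrac{f(m)}{g(m)} \le \dfrac{\max_{[a,b]} f}{\min_{[a,b]} g} = {\rm M_{ratio}}(f,g)$, because $f(m) \le \max_{[a,b]} f$ and $0 < \min_{[a,b]} g \le g(m)$, while dividing (iv) by $g(b)-g(a) > 0$ yields ${\rm M_{ratio}}(f,g) \le \dfrac{f(b)-f(a)}{g(b)-g(a)}$; combining these two bounds closes the argument.

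I do not anticipate a genuine obstacle: the Jensen step is immediate, and the scalar estimate is a short sandwich through ${\rm M_{ratio}}(f,g)$. The only point requiring care is the bookkeeping in the degenerate regimes ($g(b) = g(a)$, $\min_{[a,b]} g = 0$, or $f \equiv 0$), where ${\rm M_{ratio}}(f,g)$ can be $0$ or $+\infty$ and the asserted inequality collapses to a statement of the form $0 \le c$ with $c \ge 0$. I would also remark that this route invokes only the convexity of $g$, the sign condition $g \ge 0$, hypothesis (iv), and $f(b) \ge f(a)$; the concavity clause of (i) and hypothesis (iii) are not logically needed here — (iii) merely guarantees ${\rm M_{ratio}}(f,g) \ge 1$, so that (iv) is a meaningful normalization — and a proof that additionally passes Lemma~\ref{unproved_lemma} through the concave $f$ would only yield the weaker statement with $\langle f(A)h,h\rangle$ in place of $f(\langle Ah,h\rangle)$.
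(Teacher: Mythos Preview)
Your proof is correct and shares the paper's overall architecture: both establish the scalar inequality $(g(b)-g(a))\,f(t) \le (f(b)-f(a))\,g(t)$ for all $t\in[a,b]$, substitute $t=\langle Ah,h\rangle$, and finish with Lemma~\ref{unproved_lemma} applied to the convex $g$. The difference is in how the scalar inequality is obtained. The paper introduces the auxiliary function $F(t)=(f(b)-f(a))\,g(t)-(g(b)-g(a))\,f(t)$, argues that $F$ is convex (this is where the concavity of $f$ is invoked), uses the Cauchy mean value theorem to locate a critical point $\xi$, and then appeals to (iv) at that single point to get $F(\xi)\ge 0$, hence $F\ge 0$ everywhere by convexity. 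You instead sandwich directly: $f(m)/g(m)\le \max_{[a,b]}f\big/\min_{[a,b]}g = {\rm M_{ratio}}(f,g)\le (f(b)-f(a))/(g(b)-g(a))$ at every $m$, which dispenses with both the auxiliary function and the mean value theorem. Your route is shorter, handles the degenerate regimes ($g(b)=g(a)$, $\min g=0$, $f\equiv 0$) more explicitly than the paper does, and, as you rightly observe, shows that the concavity clause in (i) and hypothesis (iii) play no logical role in the conclusion; the paper's detour through the convexity of $F$ is precisely what manufactures the apparent dependence on $f$ being concave.
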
	
		
		\begin{proof}
			Define a function $F:[a, b] \rightarrow \mathbb{R}$ by
			\begin{equation}
				F(t) = (f(b) - f(a))g(t) - (g(b) - g(a)) f(t).
			\end{equation} 
			From the conditions (i),(ii) and (iv), we see that $F(t)$ is a convex function.	
			As $f(t)$ and $g(t)$ are differentiable functions, 
			\begin{equation}
				F'(t) = (f(b) - f(a))g'(t) - (g(b) - g(a)) f'(t).
			\end{equation}
			The Cauchy mean value theorem indicates that there exists $\xi \in (a, b)$ such that
			\begin{equation}
				\begin{split}
					& \frac{g(b) - g(a)}{f(b) - f(a)} = \frac{g'(\xi)}{f'(\xi)}\\
				\text{or}~ & (g(b) - g(a))f'(\xi) = (f(b) - f(a))g'(\xi) \\
					\text{or}~ & (f(b) - f(a))g'(\xi) - (g(b) - g(a))f'(\xi) = 0, ~\text{or}~ F'(\xi) = 0.
				\end{split}
			\end{equation} 
			As $F(t)$ is a convex function, $\xi$ is minima of $F(t)$. Also, we have 
			\begin{equation}
				f(b) - f(a) \geq {\rm M_{ratio}}(f,g) (g(b) - g(a)) \geq 0.
			\end{equation}
			Hence, 
			\begin{equation}
				f(b) - f(a) \geq (g(b) - g(a)) \frac{f(\xi)}{g(\xi)}.
			\end{equation}
			Thus we have $F(\xi) = (f(b) - f(a))g(\xi) - (g(b) - g(a)) f(\xi) > 0$. Therefore, $F(t) \geq 0$ for all $t \in [a, b]$, that is
			\begin{equation}\label{first_fundamental}
				(g(b) - g(a)) f(t) \leq (f(b) - f(a)) g(t).
			\end{equation}
			Consider $t = \langle Ah, h \rangle$. As there exists $a$ and $b$ such that $a{{\mathbf 1}_{\mathcal H}} \leq A \leq b{{\mathbf 1}_{\mathcal H}}$, we have $a \leq t \leq b$. Replacing it in equation (\ref{first_fundamental}) we have
			\begin{equation}
				(g(b) - g(a)) f(\langle Ah, h \rangle) \leq (f(b) - f(a)) g(\langle Ah, h \rangle).
			\end{equation}
			As $g$ is a convex function applying Lemma \ref{unproved_lemma} we have
			\begin{equation}
				(g(b) - g(a)) f(\langle Ah, h \rangle) \leq (f(b) - f(a)) g(\langle Ah, h \rangle) \leq (f(b) - f(a)) (\langle g(A)h, h \rangle),
			\end{equation}
			which concludes this theorem.
		\end{proof}

\begin{corollary}\label{Added_cor01}
Let $f$ and $g$ satisfies the assumptions in Theorem \ref{concave_and_convex_lemma}. Then, for any positive operator $A,B$ with $aA\le B\le bA$,  we have
\[A^{1/2}f\left(A^{-1/2}BA^{-1/2}\right)A^{1/2}\le \left(\frac{f\left( b \right)-f\left( a \right)}{g\left( b \right)-g\left( a \right)}\right)A^{1/2}g\left(A^{-1/2}BA^{-1/2}\right)A^{1/2}.\]
\end{corollary}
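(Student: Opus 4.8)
The plan is to reduce the operator inequality for the pair $(A,B)$ with $aA \le B \le bA$ to the already-established scalar-with-operator inequality \eqref{first_fundamental} applied to the single operator $C := A^{-1/2}BA^{-1/2}$, and then congruence-transform back by $A^{1/2}$. The starting observation is that $aA \le B \le bA$ is equivalent, after multiplying on both sides by $A^{-1/2}$ (which preserves operator order since $A^{-1/2}$ is positive and invertible), to $a{{\mathbf 1}_{\mathcal H}} \le C \le b{{\mathbf 1}_{\mathcal H}}$. So $C$ is a self-adjoint operator whose spectrum lies in $[a,b]$, which is exactly the hypothesis under which Theorem \ref{concave_and_convex_lemma} — or more precisely its operator-level intermediate inequality — is available.

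The key step is to promote \eqref{first_fundamental}, which is the scalar inequality $(g(b)-g(a))f(t) \le (f(b)-f(a))g(t)$ valid for all $t \in [a,b]$, to an operator inequality via the continuous functional calculus. Since $f$ and $g$ are continuous on $[a,b]$ and the spectrum of $C$ sits inside $[a,b]$, applying the functional calculus to the scalar inequality $(g(b)-g(a))f(t) \le (f(b)-f(a))g(t)$ gives
\[
(g(b)-g(a))\,f(C) \le (f(b)-f(a))\,g(C),
\]
using that $t \mapsto \phi(t)$ with $\phi(t) := (f(b)-f(a))g(t) - (g(b)-g(a))f(t) \ge 0$ on $[a,b]$ forces $\phi(C) \ge 0$. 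Note $f(b)-f(a) \ge 0$ and $g(b)-g(a) \ge 0$ by condition (iv), and in fact $g(b) - g(a) > 0$ generically (if $g(b) = g(a)$ the statement degenerates and should be handled or excluded); I will assume $g(b) > g(a)$ so that division by it is legitimate, matching the displayed inequality which divides by $g(b)-g(a)$.

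Finally, I would conjugate by $A^{1/2}$: for any positive operator $X \ge Y$ and any operator $T$ one has $TXT^* \ge TYT^*$, so multiplying the displayed operator inequality on the left and right by $A^{1/2}$ yields
\[
(g(b)-g(a))\,A^{1/2}f(C)A^{1/2} \le (f(b)-f(a))\,A^{1/2}g(C)A^{1/2},
\]
and dividing through by the positive scalar $g(b)-g(a)$ and substituting $C = A^{-1/2}BA^{-1/2}$ gives exactly the claimed inequality. The main obstacle is not technical depth but bookkeeping: one must be careful that $A^{-1/2}$ exists (so $A$ must be assumed strictly positive, which is implicit in writing $A^{-1/2}$), that the spectral containment $a{{\mathbf 1}_{\mathcal H}} \le C \le b{{\mathbf 1}_{\mathcal H}}$ genuinely follows from $aA \le B \le bA$, and that $g(b) - g(a) \ne 0$ so the ratio makes sense. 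I expect the congruence-invariance of operator order and the functional-calculus step to be routine, so the only real care needed is in justifying these three sanity conditions.
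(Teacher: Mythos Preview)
Your argument is correct but takes a slightly different route from the paper. The paper applies Theorem \ref{concave_and_convex_lemma} to the operator $C:=A^{-1/2}BA^{-1/2}$, which yields $(g(b)-g(a))\,f(\langle Ch,h\rangle)\le (f(b)-f(a))\,\langle g(C)h,h\rangle$ for every unit vector $h$, and then invokes Lemma \ref{unproved_lemma} (concavity of $f$) to replace $f(\langle Ch,h\rangle)$ by the smaller quantity $\langle f(C)h,h\rangle$; since the resulting inequality holds for all unit $h$, the operator inequality follows, and conjugation by $A^{1/2}$ finishes. You instead reach inside the proof of Theorem \ref{concave_and_convex_lemma}, pull out the purely scalar inequality \eqref{first_fundamental}, and promote it to $C$ in one stroke via the continuous functional calculus, bypassing Lemma \ref{unproved_lemma} entirely. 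Your route is shorter and more elementary (it needs only that a nonnegative continuous function of a self-adjoint operator is positive), while the paper's route has the virtue of citing only the \emph{statement} of Theorem \ref{concave_and_convex_lemma} rather than an intermediate step of its proof. Both arrive at the same operator inequality on $C$ and then conjugate by $A^{1/2}$ identically.
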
		

\begin{proof}
From the convexity of $f$ and $a{{\mathbf 1}_{\mathcal H}}\le A^{-1/2}BA^{-1/2}\le b{{\mathbf 1}_{\mathcal H}}$, we have with  Lemma \ref{unproved_lemma} and Theorem \ref{concave_and_convex_lemma},
$$
\left\langle {f\left( {{A^{ - 1/2}}B{A^{ - 1/2}}} \right)h,h} \right\rangle  \le \left( {\frac{{f\left( b \right) - f\left( a \right)}}{{g\left( b \right) - g\left( a \right)}}} \right)\left\langle {g\left( {{A^{ - 1/2}}B{A^{ - 1/2}}} \right)h,h} \right\rangle,
$$
which implies the result.
\end{proof}
\begin{corollary}\label{Added_cor02}
Let $f$ and $g$ satisfies the assumptions in Theorem \ref{concave_and_convex_lemma}. Given two positive operators $A$ and $B$ with $B\le A$ we have
\[f\left( B \right)\le \frac{f\left( b \right)-f\left( a \right)}{g\left( b \right)-g\left( a \right)}g\left( A \right).\]
\end{corollary}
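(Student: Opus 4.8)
The plan is to transport the scalar inequality \eqref{first_fundamental}, namely $(g(b)-g(a))f(t)\le(f(b)-f(a))g(t)$ for $t\in[a,b]$, which was established during the proof of Theorem \ref{concave_and_convex_lemma}, up to the operator level by pairing with an arbitrary unit vector and invoking Jensen's operator inequality (Lemma \ref{unproved_lemma}) twice: once in its concave form for $f$ and once in its convex form for $g$. All standing hypotheses (i)--(iv) of Theorem \ref{concave_and_convex_lemma}, in particular $a\ge 0$, are retained. Since ``positive with $B\le A$'' does not by itself place the spectra inside $[a,b]$, I will work under the (implicit) normalization $a\mathbf{1}_{\mathcal H}\le B$ and $A\le b\mathbf{1}_{\mathcal H}$; together with $B\le A$ this gives $a\mathbf{1}_{\mathcal H}\le B\le A\le b\mathbf{1}_{\mathcal H}$, so that $\sigma(A),\sigma(B)\subset[a,b]$, both $f(B)$ and $g(A)$ make sense via the functional calculus, and $g(A)\ge 0$ because $g\ge 0$ on $[a,b]$. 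I also record that, as noted in Theorem \ref{concave_and_convex_lemma}, condition (iv) forces $f(b)-f(a)\ge 0$ and $g(b)-g(a)\ge 0$, so the constant $\kappa:=\dfrac{f(b)-f(a)}{g(b)-g(a)}$ appearing in the statement is a well-defined nonnegative number.

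The heart of the argument is a four-link chain of scalar inequalities, carried out for an arbitrary unit vector $h\in\mathcal H$. First, $f$ is concave and $\sigma(B)\subset[a,b]$, so Lemma \ref{unproved_lemma} gives $\langle f(B)h,h\rangle\le f(\langle Bh,h\rangle)$. Second, $B\le A$ yields $\langle Bh,h\rangle\le\langle Ah,h\rangle$, with both numbers lying in $[a,b]$, so the monotonicity of $f$ (condition (i)) gives $f(\langle Bh,h\rangle)\le f(\langle Ah,h\rangle)$. Third, applying \eqref{first_fundamental} at the point $t=\langle Ah,h\rangle\in[a,b]$ gives $f(\langle Ah,h\rangle)\le\kappa\, g(\langle Ah,h\rangle)$. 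Fourth, $g$ is convex, so the convex version of Lemma \ref{unproved_lemma} gives $g(\langle Ah,h\rangle)\le\langle g(A)h,h\rangle$, and multiplying by $\kappa\ge 0$ preserves the inequality. Concatenating the four links produces $\langle f(B)h,h\rangle\le\kappa\,\langle g(A)h,h\rangle$ for every unit $h$, i.e.\ $\langle(\kappa\, g(A)-f(B))h,h\rangle\ge 0$ for all $h$, which is precisely $f(B)\le\kappa\, g(A)$.

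I do not expect a genuine obstacle. The only subtlety is that Lemma \ref{unproved_lemma} must be used in \emph{opposite directions} for $f$ and for $g$ --- ``downward'' for the concave $f$ (to exit the operator expression $\langle f(B)h,h\rangle$) and ``upward'' for the convex $g$ (to re-enter the operator expression $\langle g(A)h,h\rangle$) --- while the scalar monotonicity of $f$ is exactly what bridges the two different evaluation points $\langle Bh,h\rangle$ and $\langle Ah,h\rangle$. It is also worth remarking that the tempting shortcut ``$f(B)\le\kappa\, g(B)\le\kappa\, g(A)$'' does not work: $g$ is merely convex and need not be operator monotone, so $g(B)\le g(A)$ cannot be inferred from $B\le A$. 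The Jensen detour above is precisely what circumvents this failure.
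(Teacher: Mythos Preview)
Your argument is correct and is essentially the paper's own proof: the paper starts from the conclusion of Theorem \ref{concave_and_convex_lemma}, namely $f(\langle Ah,h\rangle)\le \kappa\,\langle g(A)h,h\rangle$ (which already bundles your third and fourth links), then uses $\langle Bh,h\rangle\le\langle Ah,h\rangle$ together with the monotonicity of $f$, and finally applies Lemma \ref{unproved_lemma} in its concave form to pass from $f(\langle Bh,h\rangle)$ down to $\langle f(B)h,h\rangle$. Your observation that the spectral inclusion $a\mathbf{1}_{\mathcal H}\le B\le A\le b\mathbf{1}_{\mathcal H}$ is tacitly needed is well taken; the paper does not make this explicit in the statement.
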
	
\begin{proof}
As we have shown in Theorem \ref{concave_and_convex_lemma}
\[f\left( \left\langle Ah,h \right\rangle  \right)\le \frac{f\left( b \right)-f\left( a \right)}{g\left( b \right)-g\left( a \right)}\left\langle g\left( A \right)h,h \right\rangle \]
for any unit vector $h\in\mathcal H$. Since $B\le A$, then $\left\langle Bh,h \right\rangle \le \left\langle Ah,h \right\rangle $ for any unit vector $h$. On the other hand, $f$ is an increasing function, thus $f\left( \left\langle Bh,h \right\rangle  \right)\le f\left( \left\langle Ah,h \right\rangle  \right)$. Therefore,
\[f\left( \left\langle Bh,h \right\rangle  \right)\le \frac{f\left( b \right)-f\left( a \right)}{g\left( b \right)-g\left( a \right)}\left\langle g\left( A \right)h,h \right\rangle.\]
Now, by Lemma \ref{unproved_lemma},
\[\left\langle f\left( B \right)h,h \right\rangle \le \frac{f\left( b \right)-f\left( a \right)}{g\left( b \right)-g\left( a \right)}\left\langle g\left( A \right)h,h \right\rangle,\]
for any unit vector $h\in\mathcal H$. By replacing $h={x}/{\left\| x \right\|}\;$, we reach the desired inequality.
\end{proof}

	\section{Applications to relative operator entropies}

		We apply the results discussed in section \ref{sec2} in the theory of relative operator entropy. Recall that, the $t$-logarithmic function $\ln_t$ is defined by $\ln_t(x) :=\dfrac{x^t-1}{t}$ for $x>0$ and $t\ne 0$. The deformed logarithm and their generalizations are widely utilized in mathematical inequality and information theory \cite{8,dutta_furuichi, dutta_furuichi_guha}. The function $\ln_t(x)$ is an inverse of $\exp_t(x):=(1+tx)^{1/t}$, which is defined for $1+tx>0$. We easily find that $\lim\limits_{t\to 0}\exp_t(x)=\exp(x)$ and $\lim\limits_{t\to 0}\ln_t(x)=\log(x)$. Given positive operators $A$, and $B$ the relative operator entropy $S(A|B)$ \cite{FK}, the Tsallis relative operator entropy $T_t(A|B)$ \cite{YKF2005}, and the generalized relative operator entropy $S_t(A|B)$ \cite{Furuta} are respectively defined by 	
		\begin{eqnarray}
			&S(A|B):=A^{1/2}\log \left(A^{-1/2}BA^{-1/2}\right)A^{1/2},\\
			&T_t(A|B):=A^{1/2}\ln_t \left(A^{-1/2}BA^{-1/2}\right)A^{1/2},\\
			\text{and}~ & S_t(A|B):=A^{1/2}\left(A^{-1/2}BA^{-1/2}\right)^t\log\left(A^{-1/2}BA^{-1/2}\right)A^{1/2}.
		\end{eqnarray}
		It is easy to observe that $\lim\limits_{t\to 0}T_t(A|B)=S(A|B)$ and  $\lim\limits_{t\to 0}S_t(A|B)=S(A|B)$ by $\lim\limits_{t\to 0}\ln_t x =\lim\limits_{t\to 0}\ln_{-t} x=\log x$.  In addition, the following inequalities are known \cite{zou2015operator},
		\begin{equation}\label{Zou_ineq}
			A-AB^{-1}A\le T_{-t}(A|B)\le S(A|B)\le T_t(A|B)\le B-A,
		\end{equation}
		where $t\in(0,1]$. 
		
		Note that, inequality \eqref{ineq01_proof_theorem2.1} is equivalent to
		$$\ln_{-t}x\le \log x\le \ln_tx$$
		when $t:=\dfrac{1}{n}$.  The above inequality implies
		\begin{equation}\label{ordering_S_T_t}
			T_{-t}(A|B)\le S(A|B) \le T_t(A|B)
		\end{equation}
		which are covered by \eqref{Zou_ineq}. See \cite[Section 7.3]{8} for these details.

	\begin{lemma}\label{10}
		The $t$-logarithmic function $\ln_tx$, is convex in $t$ for $x \geq 1$, and concave in $t$ for $0<x \le 1$. Furthermore, this function is log-convex on $\left( -\infty,\infty \right)$. Moreover, $\ln_tx$ is a monotone increasing function on $t\in \left( -\infty,\infty \right)$.
	\end{lemma}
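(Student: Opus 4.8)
The plan is to fix $x>0$, set $h(t):=\ln_t x=\frac{x^t-1}{t}$ (with $h(0):=\log x$), and reduce all of the claims to the single elementary identity
\[
h(t)=\int_{0}^{\log x} e^{ts}\,ds,\qquad t\in\mathbb{R},
\]
which one checks by evaluating the primitive $e^{ts}/t$ for $t\neq 0$ and reading $\int_0^{\log x}1\,ds=\log x$ for $t=0$. Once this representation is in hand, monotonicity and convexity/concavity in $t$ follow from differentiating under the integral sign, and log-convexity from Hölder's inequality.

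First I would treat monotonicity: $h'(t)=\int_0^{\log x}s\,e^{ts}\,ds$, and if $\log x\ge 0$ the integrand is $\ge 0$ on $[0,\log x]$, while if $\log x<0$ reversing the limits gives $h'(t)=\int_{\log x}^{0}(-s)e^{ts}\,ds\ge 0$ since $-s\ge 0$ there; hence $h'\ge 0$ on all of $\mathbb{R}$, so $\ln_t x$ is increasing in $t$. Next, $h''(t)=\int_0^{\log x}s^{2}e^{ts}\,ds$: when $x\ge 1$ this is $\ge 0$, so $t\mapsto\ln_t x$ is convex, and when $0<x\le 1$ the oriented integral is $\le 0$, so it is concave (the value $x=1$ being the degenerate case $\ln_t1\equiv 0$).

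For log-convexity on $(-\infty,\infty)$, the key point is that the representation displays $\ln_t x$, for $x\ge 1$, as the (truncated) exponential transform $\int e^{ts}\,d\mu(s)$ of the nonnegative measure $\mu$ given by Lebesgue measure on $[0,\log x]$. Applying Hölder's inequality with exponents $\tfrac{1}{1-\lambda}$ and $\tfrac{1}{\lambda}$ then yields
\[
h\bigl((1-\lambda)t_1+\lambda t_2\bigr)\le h(t_1)^{1-\lambda}\,h(t_2)^{\lambda},
\]
i.e. $\log h$ is convex. For $0<x\le 1$ one instead writes $-h(t)=\int_{0}^{-\log x}e^{-tr}\,dr>0$ and runs the same Hölder argument, obtaining convexity of $\log\lvert\ln_t x\rvert$. (Alternatively, one may verify the equivalent inequality $h\,h''\ge (h')^{2}$ directly from the three integral formulas for $h,h',h''$ via Cauchy--Schwarz.)

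I expect the only genuinely delicate point to be the bookkeeping of signs: $\ln_t x$ is positive for $x\ge 1$ but negative for $0<x<1$, so "log-convex" here must be understood as convexity of $\log\lvert\ln_t x\rvert$, and the orientation of the interval $[0,\log x]$ has to be tracked consistently --- it is precisely this orientation reversal that turns convexity into concavity as $x$ passes through $1$. Everything else is routine differentiation under the integral sign together with one application of Hölder's (equivalently, Cauchy--Schwarz) inequality.
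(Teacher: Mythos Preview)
Your proof is correct and takes a genuinely different route from the paper. The paper computes the derivatives explicitly: it introduces the auxiliary function $f(u)=u(\log u)^2-2u\log u+2u-2$, verifies that $\dfrac{d^2}{dt^2}\ln_t x = f(x^t)/t^3$, and then analyses signs; for log-convexity it computes $(\log\ln_t x)''$ directly and reduces the numerator to the logarithmic-mean inequality $\sqrt{a}\le (a-1)/\log a$; for monotonicity it reduces $(\ln_t x)'$ to $\log a\le a-1$. Your integral representation $\ln_t x=\int_0^{\log x}e^{ts}\,ds$ bypasses all of this: monotonicity and convexity/concavity become trivial positivity statements about the integrands $se^{ts}$ and $s^{2}e^{ts}$, while log-convexity is the standard fact that a Laplace-type transform $t\mapsto\int e^{ts}\,d\mu(s)$ of a nonnegative measure is log-convex (H\"older, or equivalently the Cauchy--Schwarz inequality $(h')^2\le h\,h''$). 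Your approach is more conceptual and unifies the three claims under a single identity; the paper's computations are elementary but require spotting the right auxiliary function and scalar inequality at each stage. You also handle the sign issue for $0<x<1$ (where $\ln_t x<0$ and ``log-convex'' has to mean convexity of $\log\lvert\ln_t x\rvert$) more carefully than the paper does.
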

	\begin{proof}
		We define the function $f(t):=t(\log t)^2-2t\log t+2t-2$ for $t>0$.
		Since $f(1)=0$ and $f'(t)=(\log t)^2 \geq 0$, we have
		$f(t)\le 0$ for $0<t\le 1$ and $f(t)\ge 0$ for $t \ge 1$.
		
		Note that
		$$
		\frac{d^2}{dt^2}\left(\ln_t x\right)=\frac{1}{t^3}\left\{x^t(\log x^t)^2-2x^t\log x^t+2x^t-2\right\} = \frac{f(x^t)}{t^3}.
		$$
		Hence, $\dfrac{d^2}{dt^2}\left(\ln_t x\right)\ge 0$ if $x \geq 1$ and $\dfrac{d^2}{dt^2}\left(\ln_t x\right)\le  0$ if $0<x \le 1$.
		
		We also calculate as
		$$
		\frac{d^2}{dt^2}\left(\log \left(\ln_t x\right)\right) =\frac{\left(x^t-1\right)^2-x^t\left(\log x^t \right)^2}{t^2\left(x^t-1\right)^2}\geq 0.
		$$
		The above inequality holds by the relation $\sqrt{a}\le \dfrac{a-1}{\log a}$ for $a>0$. Thus the function $\ln_tx$ is log-convex for all $t\in \mathbb{R}$.
		
		Finally, we can calculate
		$$
		\frac{d}{dt}\left(\ln_tx\right)=\frac{x^t\log x^t -(x^t-1)}{t^2}\geq 0.
		$$
		The above inequality can be easily proven by using the fundamental inequality $\log a \le a-1$ for $a>0$.
	\end{proof}

	Employing Lemma \ref{10} and Remark \ref{3} (i), we have:
	\begin{lemma}\label{lemma_5}
		Given $x> 0$, and $0\le t\le 1$ 
		\[\log x+\frac{{{\theta }(t,x)}}{t}\le {{\ln }_{t}}x,\]
		where ${{\theta }}\left( t,x \right)=\min \left\{ {{\left( \ln_tx-\log x \right)}^{2}},{{t}^{2}} \right\}\ge 0$.
	\end{lemma}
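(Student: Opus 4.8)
The plan is to view $\ln_t x$, for fixed $x>0$, as a function of the single real variable $t$ and then feed it into Remark \ref{3}(ii). Concretely, set $\phi(t):=\ln_t x$ for $t\in\mathbb{R}$, with the convention $\phi(0):=\log x$; this is legitimate because $\lim_{t\to 0}\ln_t x=\log x$, as recalled at the beginning of Section 3. By Lemma \ref{10}, the map $t\mapsto \ln_t x$ is monotone increasing on $(-\infty,\infty)$, so $\phi$ is a monotone increasing function on an interval $I\ni 0$. Hence the hypotheses of Remark \ref{3}(ii) are met with $f=\phi$ and base point $s=0$.

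Next I would simply read off the lower bound provided by Remark \ref{3}(ii): for every $t>0$,
\[
\frac{\min\left\{\left(\phi(t)-\phi(0)\right)^2,\,t^2\right\}}{t}\le \phi(t)-\phi(0).
\]
Substituting $\phi(t)=\ln_t x$ and $\phi(0)=\log x$ converts this into
\[
\frac{\min\left\{\left(\ln_t x-\log x\right)^2,\,t^2\right\}}{t}\le \ln_t x-\log x,
\]
which is exactly $\log x+\dfrac{\theta(t,x)}{t}\le \ln_t x$ with $\theta(t,x)=\min\{(\ln_t x-\log x)^2,t^2\}$. The nonnegativity $\theta(t,x)\ge 0$ is immediate, being the minimum of the two nonnegative quantities $(\ln_t x-\log x)^2$ and $t^2$; and for $t=0$ the claimed inequality degenerates in the limit to the trivial equality $\log x=\log x$, so nothing is lost there.

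The only delicate point is checking that Remark \ref{3}(ii) genuinely applies, i.e. that $t\mapsto \ln_t x$ is monotone increasing and attains the value $\log x$ at $t=0$. Both are already in hand: the monotonicity is the last assertion of Lemma \ref{10}, whose proof shows $\frac{d}{dt}\ln_t x=\frac{x^t\log x^t-(x^t-1)}{t^2}\ge 0$ for $t\neq 0$, extended across $t=0$ by continuity; and the value at $0$ is the standard limit identity $\lim_{t\to 0}\ln_t x=\log x$. Everything else is a one-line substitution, so I do not anticipate any real obstacle — the content of the lemma is precisely Remark \ref{3}(ii) specialized to the monotone function $t\mapsto \ln_t x$ anchored at $t=0$.
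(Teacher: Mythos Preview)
Your proof is correct and follows essentially the same route as the paper, which simply states that the lemma follows from Lemma~\ref{10} (the monotonicity of $t\mapsto\ln_t x$) together with Remark~\ref{3}(i). Your use of Remark~\ref{3}(ii) is just the $s=0$ instance of Remark~\ref{3}(i), so the arguments coincide.
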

	
	Note that $\lim\limits_{t\to 0}\dfrac{\left( \ln_tx-\log x \right)^{2}}{t}=0$. Therefore, $\lim\limits_{t\to 0}\dfrac{\theta(t,x)}{t}=0$. Thus $\dfrac{\theta(t,x)}{t}$ is defined in $t=0$.
	 We have the following result from Lemma \ref{lemma_5}
	 \begin{theorem}\label{refined_ordering_S_T_t}
	 Let $A$, and $B$ be positive operators, such that, $mA\le B \le MA$ with $0<m\le M$, and let $0\le t \le 1$. Then we have
	 \begin{itemize}
	 \item[(i)] If $M<1$, then $S(A|B)+\dfrac{\theta(t,M)}{t}A\le T_t(A|B)$.
	 \item[(ii)] If $m\le 1 \le M$, then $S(A|B)\le T_t(A|B)$.
	 \item[(iii)]If $1<m$, then $S(A|B)+\dfrac{\theta(t,m)}{t}A\le T_t(A|B)$.
	 \end{itemize}
	 \end{theorem}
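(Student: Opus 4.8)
The plan is to reduce everything to a scalar inequality on the spectrum of $X:=A^{-1/2}BA^{-1/2}$ (here $A$ is invertible, as is implicit in the very definitions of $S(A|B)$ and $T_t(A|B)$) and then lift it via the functional calculus. First I would observe that the hypothesis $mA\le B\le MA$ is equivalent, after congruence by $A^{-1/2}$, to $m\mathbf{1}_{\mathcal H}\le X\le M\mathbf{1}_{\mathcal H}$, so the spectrum of $X$ lies in $[m,M]$. By Lemma \ref{lemma_5}, for every $x\in[m,M]$ and every $0\le t\le 1$ we have $\ln_t x\ge \log x+\frac{\theta(t,x)}{t}$ with $\theta(t,x)\ge 0$. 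The goal is therefore to produce, in each of the three cases, a uniform lower bound $c$ for $\frac{\theta(t,x)}{t}$ valid on all of $[m,M]$; once that is done, applying the functional calculus to $X$ gives $\ln_t X\ge \log X+c\,\mathbf{1}_{\mathcal H}$, and conjugating by $A^{1/2}$ turns this into $T_t(A|B)\ge S(A|B)+cA$, which is exactly the asserted inequality (with $c=\theta(t,M)/t$, $c=0$, $c=\theta(t,m)/t$ in cases (i), (ii), (iii) respectively).

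The key auxiliary fact is the monotone behaviour of $h(x):=\ln_t x-\log x=\frac{x^t-1}{t}-\log x$. Computing $h'(x)=x^{t-1}-x^{-1}=\frac{x^t-1}{x}$ shows that for $0<t\le 1$ the function $h$ is strictly decreasing on $(0,1)$ and strictly increasing on $(1,\infty)$, with $h(1)=0$ and $h(x)>0$ for $x\neq 1$. Consequently: in case (i), where $[m,M]\subset(0,1)$, we get $h(x)\ge h(M)>0$ on $[m,M]$; in case (iii), where $[m,M]\subset(1,\infty)$, we get $h(x)\ge h(m)>0$; and in case (ii), where $1\in[m,M]$, the sharpest uniform bound available is simply $h(x)\ge 0$.

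To pass from a bound on $h$ to a bound on $\theta(t,x)=\min\{h(x)^2,t^2\}$ I would use the elementary monotonicity of the minimum: if $0\le b\le a$ then $\min\{a,c\}\ge\min\{b,c\}$ for every $c\ge 0$. Thus $h(x)\ge h(M)\ge 0$ gives $\theta(t,x)\ge\theta(t,M)$ in case (i), and likewise $\theta(t,x)\ge\theta(t,m)$ in case (iii), while in case (ii) we only retain $\theta(t,x)\ge 0$. Substituting these into the scalar inequality of Lemma \ref{lemma_5} and then carrying out the functional-calculus and conjugation step described above finishes all three cases. The only mildly delicate point — hence the one I would write out most carefully — is the sign analysis of $h'$ and the resulting one-sided monotonicity of $h$ about $x=1$; everything else is routine. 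One should also note (as already remarked after Lemma \ref{lemma_5}) that $\theta(t,\cdot)/t$ extends continuously by the value $0$ at $t=0$, so the statements remain meaningful there.
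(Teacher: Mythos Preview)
Your argument is correct and follows essentially the same route as the paper: both reduce to the scalar inequality of Lemma~\ref{lemma_5}, use the monotonicity of $h(x)=\ln_t x-\log x$ about $x=1$ to locate the minimum of $\theta(t,\cdot)$ on $[m,M]$, and then lift to operators by the functional calculus and conjugation by $A^{1/2}$. Your version is in fact more explicit than the paper's, since you verify the sign of $h'(x)=(x^t-1)/x$ and spell out the passage from a lower bound on $h$ to one on $\theta$ via monotonicity of $\min$.
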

	 
	 \begin{proof}
	 Note that  the function $f_t(x):=\ln_tx-\log x$ defined for $x > 0$ and $0\le t \le 1$ is monotone decreasing in $0< x \le 1$ and monotone increasing in $x \ge 1$. From Lemma \ref{lemma_5}, for the case (i) we have
	 $$
	 \log x +\min_{(0<)m\le x \le M (<1)}\frac{\theta(t,x)}{t}\le \ln_t x.
	 $$
Thus we have the inequality in (i)  by setting $x:=A^{-1/2}BA^{-1/2}$ and multiplying $A^{1/2}$ to both sides.
The inequalities in (ii) and (iii) similarly follow with $\theta(t,1)=0$. 
	 
	 \end{proof}
	Theorem \ref{refined_ordering_S_T_t} includes a refinement of the second inequality in \eqref{ordering_S_T_t}.
	
	\begin{lemma}\label{lemma_7}
		For $s,t>0$ and $x\ge 1$, we have
		$$
		\exp \left(\frac{\left(x^s\log x-\ln_sx\right)\left( t-s \right)}{s\ln_sx}  \right)\ln_sx\le \ln_tx \le \exp \left(\frac{\left(x^t\log x-\ln_tx\right)\left( t-s \right)}{t\ln_tx}  \right)\ln_sx.
		$$
	\end{lemma}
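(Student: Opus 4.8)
The plan is to recognize Lemma~\ref{lemma_7} as a direct application of Theorem~\ref{1} to the one–variable function $f(t):=\ln_t x$, with $x$ held fixed. By Lemma~\ref{10} this function is differentiable and log-convex in $t$ on all of $\mathbb{R}$, and for $x>1$ we have $\ln_t x=\dfrac{x^t-1}{t}>0$ for every real $t$ (numerator and denominator share a sign, and $\ln_0 x=\log x>0$), so $f$ maps $(0,\infty)$ into $(0,\infty)$ and Theorem~\ref{1} applies with $J=(0,\infty)$. The degenerate case $x=1$ is trivial since then all three members of the asserted chain equal $0$, so it suffices to treat $x>1$.

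Next I would compute the logarithmic derivative of $f$. Using the formula $\dfrac{d}{dt}\ln_t x=\dfrac{x^t\log x^t-(x^t-1)}{t^2}$ already recorded in the proof of Lemma~\ref{10}, together with the identity $t\ln_t x=x^t-1$, a short simplification yields
$$\frac{f'(t)}{f(t)}=\frac{x^t\log x-\ln_t x}{t\ln_t x}.$$
Substituting this (with $t$ and with $s$) into the inequality of Theorem~\ref{1},
$$\exp\!\left(\frac{f'(s)}{f(s)}(t-s)\right)\le\frac{f(t)}{f(s)}\le\exp\!\left(\frac{f'(t)}{f(t)}(t-s)\right),$$
and then multiplying through by $f(s)=\ln_s x>0$, one obtains precisely the claimed two-sided estimate for $\ln_t x$.

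There is no substantial obstacle here; the points that need a moment of care are (a) checking positivity of $\ln_s x$ and $\ln_t x$, which is what makes Theorem~\ref{1} applicable and makes the final multiplication preserve the inequalities, and (b) noting that Theorem~\ref{1} is stated for \emph{arbitrary} $s,t\in J$, so no separate discussion of the sign of $t-s$ is required. The hypothesis $x\ge 1$ is exactly what guarantees both the log-convexity of $t\mapsto\ln_t x$ and the positivity of $\ln_t x$ for all real $t$; for $0<x<1$ the quantity $\ln_t x$ becomes negative when $t>0$ and the argument would no longer go through.
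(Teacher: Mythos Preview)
Your proposal is correct and follows essentially the same route as the paper: both proofs invoke Lemma~\ref{10} to get log-convexity (and positivity) of $t\mapsto\ln_t x$ for $x\ge 1$, and then apply Theorem~\ref{1} with $f(t)=\ln_t x$. You supply more detail than the paper does---in particular the explicit computation of $f'(t)/f(t)=\dfrac{x^t\log x-\ln_t x}{t\ln_t x}$ and the separate treatment of $x=1$---but the underlying argument is identical.
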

	\begin{proof}
	Observe that $\ln_t x \ge 0$ if $x\ge 1$ and $t>0$. By Lemma \ref{10}, $\ln_t x$ is log-convex in $t> 0$. Then we can apply the inequalities \eqref{theorem3_1st_statement} in Theorem \ref{1} with $f(t):=\ln_t x$.
	\end{proof}
	The following result gives the relation of two Tsallis relative operator entropies.
	\begin{theorem}
	Let $A,B$ be positive operators such that $mA\le B \le MA$ with $1\le m\le M$, and let $s,t> 0$.
	Then we have
	\begin{itemize}
	\item[(i)] If $t\ge s \ge 0$, then $0\le  e^{\eta(m,s)(t-s)}T_s(A|B)\le T_t(A|B)\le e^{\eta(M,t)(t-s)}T_s(A|B)$.
	\item[(ii)]  If $s\ge t \ge 0$, then $0\le   e^{\eta(M,s)(t-s)}T_s(A|B)\le T_t(A|B)\le e^{\eta(m,t)(t-s)}T_s(A|B)$.
	\end{itemize}
	where $\eta(x,a):=\dfrac{x^a\log x-\ln_ax}{a\ln_ax}$ is defined  for $a\ge 0$ and $x>0$.
	\end{theorem}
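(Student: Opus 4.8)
The plan is to reduce the operator inequalities to scalar inequalities on the spectrum and then invoke Lemma~\ref{lemma_7} together with a monotonicity property of $\eta$. Put $X := A^{-1/2}BA^{-1/2}$. Since $mA \le B \le MA$ with $1 \le m \le M$, the spectrum of $X$ lies in $[m,M] \subseteq [1,\infty)$, hence $\ln_s X \ge 0$ and $\ln_t X \ge 0$, so that $T_s(A|B) = A^{1/2}(\ln_s X)A^{1/2} \ge 0$ and $T_t(A|B) \ge 0$; this disposes of the leftmost inequality in both (i) and (ii). For the remaining inequalities it suffices, by the functional calculus and conjugation by $A^{1/2}$ (which preserves the operator order), to prove the scalar bounds
\[ e^{c_-(t-s)}\,\ln_s x \;\le\; \ln_t x \;\le\; e^{c_+(t-s)}\,\ln_s x \qquad (x \in [m,M]), \]
with $(c_-,c_+) = (\eta(m,s),\eta(M,t))$ in case (i) and $(c_-,c_+) = (\eta(M,s),\eta(m,t))$ in case (ii).

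Next I would note that $\eta(x,a) = f'(a)/f(a)$ for the function $f(a) := \ln_a x$, so that Lemma~\ref{lemma_7} (which is precisely Theorem~\ref{1} applied to this function, log-convex in $a$ and with $\ln_a x \ge 0$ since $x \ge 1$) reads
\[ e^{\eta(x,s)(t-s)}\,\ln_s x \;\le\; \ln_t x \;\le\; e^{\eta(x,t)(t-s)}\,\ln_s x \qquad (x \ge 1,\ s,t>0). \]
Thus everything reduces to comparing $\eta(x,\cdot)$ with $\eta(m,\cdot)$ or $\eta(M,\cdot)$, i.e.\ to the behaviour of $x \mapsto \eta(x,a)$ on $[1,\infty)$.

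The key step is therefore the claim that, for fixed $a > 0$, the map $x \mapsto \eta(x,a)$ is monotone increasing on $[1,\infty)$. I would prove this by the substitution $u = x^a \in [1,\infty)$, under which
\[ \eta(x,a) \;=\; \frac{x^a\log x - \ln_a x}{a\,\ln_a x} \;=\; \frac{1}{a}\cdot\frac{u\log u - u + 1}{u-1} \;=:\; \frac{1}{a}\,\phi(u), \]
and then showing that $\phi$ is increasing on $(1,\infty)$: a direct differentiation gives $\phi'(u) = \dfrac{(u-1)-\log u}{(u-1)^2} \ge 0$ because $\log u \le u-1$. Since $x \mapsto x^a$ is increasing for $a>0$, the claim follows (at $x=1$ one reads $\eta(1,a)=0=\lim_{u\to 1^+}\phi(u)/a$, consistently, and there both sides of the target inequality vanish).

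Finally I would assemble the pieces. In case (i), $t-s \ge 0$, so from $\eta(m,s) \le \eta(x,s)$ and $\eta(x,t) \le \eta(M,t)$ we get $e^{\eta(m,s)(t-s)} \le e^{\eta(x,s)(t-s)}$ and $e^{\eta(x,t)(t-s)} \le e^{\eta(M,t)(t-s)}$; multiplying through by $\ln_s x \ge 0$ and chaining with Lemma~\ref{lemma_7} yields the desired scalar bounds. In case (ii), $t-s \le 0$, and the \emph{same} monotonicity now gives $\eta(M,s)(t-s) \le \eta(x,s)(t-s)$ and $\eta(x,t)(t-s) \le \eta(m,t)(t-s)$ after multiplication by the nonpositive factor $t-s$; the rest is identical. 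Applying the functional calculus at $X$ and conjugating by $A^{1/2}$ then produces the stated operator inequalities. The only genuine work is the monotonicity of $\eta(\cdot,a)$ and keeping the direction of the exponential inequalities straight after multiplying by $t-s$, whose sign is opposite in the two cases; everything else is routine.
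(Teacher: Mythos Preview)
Your proposal is correct and follows essentially the same route as the paper's proof: both reduce to scalar inequalities via the functional calculus, invoke Lemma~\ref{lemma_7} to obtain $e^{\eta(x,s)(t-s)}\ln_s x \le \ln_t x \le e^{\eta(x,t)(t-s)}\ln_s x$, and then replace $\eta(x,\cdot)$ by its extremal values $\eta(m,\cdot)$ or $\eta(M,\cdot)$ using the monotonicity of $x\mapsto\eta(x,a)$. Your substitution $u=x^a$ and the paper's direct differentiation in $x$ yield the same derivative computation; if anything, your treatment of case~(ii) is more explicit than the paper's (which just says the case is ``proven similarly''), since you track the sign reversal when multiplying the inequality $\eta(m,a)\le\eta(x,a)\le\eta(M,a)$ by the nonpositive factor $t-s$.
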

	\begin{proof}
	Consider the function $\eta(x,a)=\dfrac{x^a\log x^a-x^a+1}{a(x^a-1)}$ for $a\ge 0$ and $x>0$. Then we have $\dfrac{d\eta(x,a)}{dx}=\dfrac{x^{a-1}\left(x^a-1-\log x^a\right)}{(x^a-1)^2}\ge 0$ with $\lim\limits_{x\to 1}\eta(x,a)=0$. Therefore the function $\eta(x,a)$ is monotone increasing in $x>0$ for all $a \ge 0$, and $\eta(x,a)\le 0$ if $0<x\le 1$, and $\eta(x,a)\ge 0$ if $x \ge 1$. For the case $t\ge s \ge 0$ and $m \ge 1$, we have $0\le \exp\left(\eta(m,s)(t-s)\right)\ln_s x\le\ln_t x \le \exp\left(\eta(M,s)(t-s)\right)\ln_s x$ thanks to Lemma \ref{lemma_7}. Putting $x:=A^{-1/2}BA^{-1/2}$ in these inequalities and multiplying $A^{1/2}$ to both sides, then we have the operator inequalities given in (i). Taking an attention that for $a \ge 0$ we have $\ln_ax \ge 0$ if $x \ge 1$,
	(ii) can be proven similarly using  Lemma \ref{lemma_7}. 
	\end{proof}

	We give another application of Theorem \ref{1} for estimating bounds of the relative operator entropy.
	\begin{theorem}\label{bound_ROE}
	Let $A,B$ be positive operators such that $mA\le B \le MA$ with $0<m\le M$.
	\begin{itemize}
	\item[(i)] If $M\le 1/e$, then we have
	\begin{equation}\label{bound_ROE_eq01}
	-\exp\left(\dfrac{em-1}{em\log m}\right)A\le S(A|B)\le -\exp\left(1-eM\right)A\le 0.
	\end{equation}
	\item[(ii)] If $1\le m\le M\le e$, then we have
	\begin{equation}\label{bound_ROE_eq02}
	0\le \exp\left(\dfrac{e-M}{M\log M}\right)A \le S(A|B)\le \exp\left(\dfrac{e-m}{e}\right)A.
	\end{equation}
	\end{itemize}
	\end{theorem}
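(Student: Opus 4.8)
The plan is to reduce the operator inequality to a scalar two-sided estimate for $\log x$ on $[m,M]$ and then obtain that estimate from Theorem~\ref{1} applied to a suitable log-convex or log-concave function. Set $X:=A^{-1/2}BA^{-1/2}$; congruence by $A^{-1/2}$ turns $mA\le B\le MA$ into $m{\mathbf 1}_{\mathcal H}\le X\le M{\mathbf 1}_{\mathcal H}$ and turns $S(A|B)$ into $A^{1/2}\log(X)A^{1/2}$. Since congruence and the continuous functional calculus both preserve the operator order, it suffices to prove $\alpha\le\log x\le\beta$ for all $x\in[m,M]$, where $\alpha,\beta$ are the scalar constants appearing in \eqref{bound_ROE_eq01} and \eqref{bound_ROE_eq02}; conjugating back by $A^{1/2}$ then gives the two assertions. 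Since $\log$ is increasing, this amounts to bounding $\log m$ from below and $\log M$ from above by the prescribed quantities.

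For part (i), since $M\le 1/e<1$ the function $f(x):=-\log x$ is positive on $[m,M]$, and a short computation gives $\big(\log f\big)''(x)=-\dfrac{1+\log x}{(x\log x)^2}\ge 0$ exactly for $x\le 1/e$, so $f$ is differentiable and log-convex on $(0,1/e]$. I would apply the first statement of Theorem~\ref{1} to this $f$ at the reference point $s=1/e$, where $f(1/e)=1$ and $f'(1/e)/f(1/e)=-e$. Putting $t=M$ in the left inequality of \eqref{theorem3_1st_statement} gives $-\log M\ge\exp(1-eM)$, hence $\log x\le\log M\le -\exp(1-eM)\le 0$ for $x\le M$; putting $t=m$ in the right inequality gives $-\log m\le\exp\!\big(\tfrac{em-1}{em\log m}\big)$ — the exponent $\tfrac{m-1/e}{m\log m}$ is $\ge 0$ because numerator and denominator are both negative — hence $\log x\ge\log m\ge -\exp\!\big(\tfrac{em-1}{em\log m}\big)$ for $x\ge m$. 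The chain $-\exp\!\big(\tfrac{em-1}{em\log m}\big)\le\log m\le\log M\le -\exp(1-eM)$ also orders the two outer constants, and conjugating by $A^{1/2}$ yields \eqref{bound_ROE_eq01}.

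Part (ii) is handled the same way with the reciprocal roles: for $1\le m\le M\le e$ the function $f(x):=\log x$ is positive on $(1,e]$, and the identical computation shows $\big(\log f\big)''<0$ there, i.e. $f$ is log-concave. I would apply the second statement of Theorem~\ref{1} at the reference point $s=e$ (where $f(e)=1$, $f'(e)/f(e)=1/e$) and substitute $t=M$ and $t=m$ into \eqref{theorem3_2nd_statement}, which produces estimates of $\log M$ and $\log m$ against $\exp\!\big(\tfrac{e-M}{M\log M}\big)$ and $\exp\!\big(\tfrac{e-m}{e}\big)$; the degenerate endpoint case $m=1$ is then recovered by letting $m\downarrow 1$. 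I expect the main obstacle to be converting these endpoint estimates into a single two-sided bound valid throughout $[m,M]$: Theorem~\ref{1} controls $\log t$ only at the chosen point $t$, so I must also check the monotonicity in $x$ of the auxiliary maps $x\mapsto\exp\!\big(\tfrac{e-x}{x\log x}\big)$ and $x\mapsto\exp\!\big(\tfrac{e-x}{e}\big)$ on $(1,e]$ — the first reducing to the sign of $\tfrac{d}{dx}\tfrac{e-x}{x\log x}=\tfrac{x-e-e\log x}{(x\log x)^2}$ — so as to know that the bound obtained at one endpoint really does dominate (resp. is dominated by) $\log x$ on the whole interval. A last congruence by $A^{1/2}$ then gives \eqref{bound_ROE_eq02}.
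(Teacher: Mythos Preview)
Your plan coincides with the paper's: in (i) apply \eqref{theorem3_1st_statement} to the log-convex function $f(x)=-\log x$ on $(0,1/e]$ with anchor $s=1/e$, in (ii) apply \eqref{theorem3_2nd_statement} to the log-concave function $f(x)=\log x$ with anchor $s=e$, and then pass to operators via $X=A^{-1/2}BA^{-1/2}$. The only cosmetic difference is that in (i) you plug in $t=m,M$ and use the monotonicity of $\log$ to cover $[m,M]$, whereas the paper keeps the pointwise scalar inequality and then checks that $t\mapsto 1-et$ and $t\mapsto\tfrac{et-1}{et\log t}$ are decreasing on $(0,1/e]$; in (ii) you correctly anticipate that this auxiliary-monotonicity step (the sign of $\tfrac{d}{dx}\tfrac{e-x}{x\log x}=\tfrac{x-e-e\log x}{(x\log x)^2}$, exactly as in the paper) is what is needed to land on the constants as stated.
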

	\begin{proof}
	\begin{itemize}
	\item[(i)] Since $\dfrac{d^2}{dx^2}\left(\log \left(-\log x\right)\right)=\dfrac{-1-\log x}{x^2\left(\log x\right)^2} \ge 0$ for $x\in (0,1/e]$, the function $f(x):=-\log x$ is log-convex. Also we observe that $f(x)\ge 0$ for $x\in (0,1/e]$. Applying the inequalities \eqref{theorem3_1st_statement} in Theorem \ref{1} with $f(x):=-\log x$, $J:=(0,1/e]$ and $s:=1/e$ and manipulating its inequalities, we have
	$$
	\exp(1-et)\le -\log t\le \exp\left(\dfrac{t-1/e}{t\log t}\right).
	$$
 	Putting $t:=A^{-1/2}BA^{-1/2}$ in the above, from the condition $mA\le B \le MA$ with $0<m\le M$, we have
 	$$
 	\min_{(0<)m\le t \le M (\le 1/e)}\exp(1-et)\le -\log A^{-1/2}BA^{-1/2} \le \max_{(0<)m\le t \le M (\le 1/e)}\exp\left(\dfrac{et-1}{et\log t}\right).
 	$$
 	Multiplying $A^{1/2}$ to both sides, we have the inequalities \eqref{bound_ROE_eq01}, since
 	the function $1-et$ is decrasing and the function $\dfrac{et-1}{et\log t}$ is decreasing on $(0,1/e]$.
 	Indeed, we calculate $\dfrac{d}{dt}\left(\dfrac{et-1}{et\log t}\right)=\dfrac{h(t)}{e t^2(\log t)^2}$ with $h(t):=\log t -e t +1$. Since $h'(t)=1/t-e\ge 0$ so that $h(t)\le h(1/e)=-1$, we have $\dfrac{d}{dt}\left(\dfrac{et-1}{et\log t}\right) < 0$.
	\item[(ii)] Since $\dfrac{d^2}{dx^2}\left(-\log \left(\log x\right)\right)=\dfrac{\log x+1}{x^2(\log x)^2}\ge 0$ for $x \ge 1$. Also it is trivial that $\log x \ge 0$ for $x \ge 1$. Applying the inequalities \eqref{theorem3_2nd_statement} in Theorem \ref{1} with $f(x):=\log x$, $J:=[1,\infty)$ and $s:=e$ and manipulating its inequalities, we have
	$$
	\exp\left(\dfrac{e-t}{t\log t}\right)\le \log t\le \exp\left(\dfrac{e-t}{e}\right).
	$$
	In a similar way to (i), we have
	$$
	\min_{(1\le)m\le t \le M}\exp\left(\dfrac{e-t}{t\log t}\right)\le \log A^{-1/2}BA^{-1/2} \le \max_{(1\le)m\le t \le M}\exp\left(\dfrac{e-t}{e}\right),
	$$
	which implies  the inequalities \eqref{bound_ROE_eq01}, since the function $\dfrac{e-t}{e}$ is decreasing and the function $\dfrac{e-t}{t\log t}$ is also decreasing for $t \in [1,e]$. Indeed, we calculate $\dfrac{d}{dt}\left(\dfrac{e-t}{t\log t}\right)=\dfrac{k(t)}{t^2(\log t)^2}$ with $k(t):=t-e-e\log t$. Since $k'(t)=-e/t+1\le 0$ for $1\le t \le e$ so that $k(t)\le k(1)=1-e<0$, we have $\dfrac{d}{dt}\left(\dfrac{e-t}{t\log t}\right)<0$. 
	\end{itemize}
	\end{proof}

	\begin{lemma}\label{5}
		Let $f:\mathbb{R}\to(0,\infty)$ be a log-convex function. Then the functions $g(t):=\dfrac{f(t)}{(1-t)f(0)+t f(1)}$ and $h(t):=\dfrac{f(t)}{f(0)^{1-t}f(1)^t}$ for $t\in\mathbb{R}$ are log-convex too.
	\end{lemma}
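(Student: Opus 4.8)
The plan is to verify that $\log g$ and $\log h$ are each convex on $\mathbb{R}$ by decomposing them into a sum of functions each of which is already known (or easily seen) to be convex. Write
\[
\log g(t) = \log f(t) - \log\bigl((1-t)f(0) + t f(1)\bigr), \qquad \log h(t) = \log f(t) - (1-t)\log f(0) - t\log f(1).
\]
Since $f$ is log-convex, $\log f$ is convex by definition, so in both cases the first term contributes a convex summand. For $\log h$ the remaining part is affine in $t$, hence convex (with zero second derivative), so $\log h$ is a sum of a convex function and an affine function and is therefore convex; this half is essentially immediate.

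The substantive point is the term $-\log\bigl((1-t)f(0) + t f(1)\bigr)$ appearing in $\log g$. Here I would invoke the elementary fact that $t \mapsto (1-t)a + tb$ is affine (hence both convex and concave), and that $-\log$ is a convex and \emph{decreasing} function; the composition of a convex decreasing function with a concave (in particular affine) function is convex. Alternatively, one can just compute: with $\ell(t) := (1-t)f(0)+tf(1)$, one has $\ell'(t) = f(1)-f(0)$ constant and $\ell''(t)=0$, so
\[
\frac{d^2}{dt^2}\Bigl(-\log \ell(t)\Bigr) = \frac{\bigl(\ell'(t)\bigr)^2}{\ell(t)^2} \ge 0,
\]
which is manifestly nonnegative (and $\ell(t)>0$ on $\mathbb{R}$ because $f>0$ makes both $f(0),f(1)>0$, so the expression is well defined). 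Thus $-\log\ell$ is convex, and $\log g = \log f + (-\log \ell)$ is a sum of two convex functions, hence convex.

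Putting the two pieces together gives that both $g$ and $h$ are log-convex on $\mathbb{R}$, as claimed. I do not anticipate a genuine obstacle here: the only point requiring a little care is making sure the denominators are strictly positive so that the logarithms and the second-derivative computation are legitimate, which follows from the hypothesis $f:\mathbb{R}\to(0,\infty)$. If one prefers to avoid differentiability assumptions on $f$ (the statement does not assume $f$ differentiable), one should phrase the argument purely through the convexity inequalities — i.e. directly check $g\bigl(\tfrac{s+t}{2}\bigr) \le \sqrt{g(s)g(t)}$ using log-convexity of $f$ together with the AM–GM-type estimate on the affine denominator — rather than through second derivatives; this is the same bookkeeping as in the proof of Corollary \ref{corollary_9} and is the version I would actually write down.
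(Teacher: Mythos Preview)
Your decomposition $\log g = \log f + (-\log \ell)$ and $\log h = \log f + (\text{affine})$ is correct and in fact cleaner than what the paper does. For $g$, the paper tacitly assumes $f$ is twice differentiable and computes the full second derivative of $\log g$ by brute force, expanding the numerator and invoking $f''f - (f')^2 \ge 0$; your route avoids any smoothness hypothesis on $f$ and reduces everything to the elementary fact that $-\log$ composed with an affine map is convex. For $h$, the paper verifies the midpoint inequality $h\bigl(\tfrac{s+t}{2}\bigr) \le \sqrt{h(s)h(t)}$ directly, essentially the same calculation as in Corollary~\ref{corollary_9}; your observation that subtracting an affine function preserves convexity is equivalent but more transparent and does not require a continuity argument to pass from midpoint to full convexity.

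One genuine slip: your claim that $\ell(t)=(1-t)f(0)+tf(1)>0$ on all of $\mathbb{R}$ ``because $f>0$ makes both $f(0),f(1)>0$'' is false. An affine function with two positive values need not stay positive outside $[0,1]$; e.g.\ $f(0)=1$, $f(1)=2$ gives $\ell(t)=1+t$, negative for $t<-1$. So $g$ is not even defined (let alone positive) on all of $\mathbb{R}$ unless $f(0)=f(1)$. The lemma as stated is therefore a bit sloppy about the domain of $g$, and the paper's proof glosses over exactly the same point. The honest statement is that $g$ is log-convex on the open interval $\{t:\ell(t)>0\}$, which always contains $[0,1]$; the subsequent corollary only uses $0\le t\le 1$, so nothing downstream is affected. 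Your argument is valid verbatim on that restricted domain.
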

	\begin{proof}
		Since $f$ is log-convex, we have $\dfrac{d^2}{dt^2}\log f(t)=\dfrac{f''(t)f(t)-f'(t)^2}{f(t)^2}\geq 0$, namely $f''(t)f(t)-f'(t)^2 \ge 0$, then we obtain
		\begin{eqnarray*}
			&&\frac{{{d^2}}}{{d{t^2}}}\left( {\log \left( {\frac{{f\left( t \right)}}{{\left( {1 - t} \right)f\left( 0 \right) + tf\left( 1 \right)}}} \right)} \right)\\
			&& = \frac{{f{{\left( t \right)}^2}{{\left\{ {f\left( 1 \right) - f\left( 0 \right)} \right\}}^2} + \left\{ {f''\left( t \right)f\left( t \right) - f'{{\left( t \right)}^2}} \right\}{{\left\{ {\left( {1 - t} \right)f\left( 0 \right) + tf\left( 1 \right)} \right\}}^2}}}{{f{{\left( t \right)}^2}{{\left\{ {\left( {1 - t} \right)f\left( 0 \right) + tf\left( 1 \right)} \right\}}^2}}} \ge 0,
		\end{eqnarray*}
		which shows log-convexity of  ${g(t)}$.
		
		We have
		\begin{equation*}
		h\left(\frac{t+s}{2}\right) = \frac{f\left(\frac{t+s}{2}\right)}{f(0)^{1-\frac{t+s}{2}}f(1)^{\frac{t+s}{2}}} \leq\frac{\sqrt{f(t)f(s)}}{\sqrt{f(0)^{1-t}f(1)^t}\sqrt{f(0)^{1-s}f(1)^s}} =\sqrt{h(t)h(s)},
		\end{equation*}
		which shows log-convexity of  ${h(t)}$.
	\end{proof}

	\begin{corollary}
		Let $a,b\ge 0$ and $0\le t\le 1$. Then
		\begin{equation}\label{cor_5_eq01}
		\exp \left( \left( \log \frac{b}{a}+1-\frac{b}{a} \right)t \right) \le \frac{a{{\sharp }_{t}}b}{a{{\nabla }_{t}}b} \le \exp \left( \left(\log \frac{b}{a}-\dfrac{b-a}{a\nabla_t b}\right)t \right).
		\end{equation}
	\end{corollary}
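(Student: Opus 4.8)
The plan is to exhibit the ratio $\dfrac{a\sharp_t b}{a\nabla_t b}$ as the value $h(t)$ of a single log-convex function and then feed $h$ into Theorem~\ref{1} with base point $s=0$, exactly as in the proof of Corollary~\ref{corollary_9}. Set
\[
h(t):=\frac{a\sharp_t b}{a\nabla_t b}=\frac{a^{1-t}b^t}{(1-t)a+tb},\qquad a,b>0,
\]
so that $\log h(t)=(1-t)\log a+t\log b-\log\bigl((1-t)a+tb\bigr)$. The first two terms are affine in $t$ and thus drop out of the second derivative, while $\frac{d^2}{dt^2}\log\bigl((1-t)a+tb\bigr)=-\dfrac{(b-a)^2}{\bigl((1-t)a+tb\bigr)^{2}}$; hence $\frac{d^2}{dt^2}\log h(t)=\dfrac{(b-a)^2}{(a\nabla_t b)^{2}}\ge 0$, so $h$ is log-convex on $[0,1]$. (Alternatively one can apply Lemma~\ref{5} to the function $f(t):=1/(a\nabla_t b)$, which is log-convex because $a\nabla_t b$ is positive and concave; the function $h$ produced by that lemma is precisely the present $h$.)

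Next I would record the two logarithmic derivatives that occur in \eqref{theorem3_1st_statement}. Differentiating the expression for $\log h$ gives
\[
\frac{h'(t)}{h(t)}=\log\frac{b}{a}-\frac{b-a}{a\nabla_t b},
\]
and evaluating at $t=0$ (where $a\nabla_0 b=a$) yields $\dfrac{h'(0)}{h(0)}=\log\dfrac{b}{a}-\dfrac{b-a}{a}=\log\dfrac{b}{a}+1-\dfrac{b}{a}$. Moreover $h(0)=\dfrac{a}{a}=1$.

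Finally, applying Theorem~\ref{1} to the log-convex function $h$ with the pair $(s,t)=(0,t)$, inequality \eqref{theorem3_1st_statement} reads
\[
\exp\!\left(\frac{h'(0)}{h(0)}\,t\right)\le \frac{h(t)}{h(0)}\le \exp\!\left(\frac{h'(t)}{h(t)}\,t\right),
\]
and substituting $h(0)=1$ together with the two derivative values computed above turns this directly into \eqref{cor_5_eq01}. I do not foresee a genuine obstacle here; the only points worth stating carefully are the positivity assumption $a,b>0$ (needed for the logarithms and for the log-convexity computation) and the equality $h(0)=1$, which is what lets the middle term of Theorem~\ref{1} collapse to $h(t)$ itself.
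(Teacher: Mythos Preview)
Your proof is correct and follows the same route as the paper: show that $h(t)=\dfrac{a\sharp_t b}{a\nabla_t b}$ is log-convex, compute $h(0)=1$ and $h'(t)/h(t)=\log\frac{b}{a}-\frac{b-a}{a\nabla_t b}$, and apply Theorem~\ref{1} with $s=0$. The only cosmetic difference is the justification of log-convexity: the paper invokes Lemma~\ref{5} with $f(t):=a\sharp_t b$ (so that the $g$ of that lemma is your $h$), whereas you compute $\frac{d^2}{dt^2}\log h(t)=\frac{(b-a)^2}{(a\nabla_t b)^2}\ge 0$ directly; your parenthetical alternative via Lemma~\ref{5} applied to $f(t)=1/(a\nabla_t b)$ (using the $h$-part of that lemma) is also valid and gives the same conclusion.
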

	\begin{proof}
		In Lemma \ref{5}, we take $f(t):=a\sharp_t b$. Then the function $g\left( t \right)=\dfrac{f(t)}{(1-t)f(0)+tf(1)}=\dfrac{a{{\sharp }_{t}}b}{a{{\nabla }_{t}}b}$ is log-convex. Since we have
		$g\left( 0 \right)=1$, 
		$g'\left( t \right)=g(t)\left(\log\dfrac{b}{a} -\dfrac{(b-a)}{a\nabla_t b}\right)$ and
		$g'\left( 0 \right)=\log \dfrac{b}{a}+1-\dfrac{b}{a}$,
		we get the desired result by Theorem \ref{1} with $s=0$.
	\end{proof}
	
	\begin{remark}
	\begin{itemize}
\item[(i)]	Putting $a=1$, $b=x$ and taking logarithm of both side of the first ineauality in \eqref{cor_5_eq01}, we have $\log \left\{(1-t)+tx\right\}\le t (x-1)$. Putting $x:=A^{-1/2}BA^{-1/2}$ in this inequality and  multiplying $A^{1/2}$ to both sides, then we have
	$$
	A^{1/2}\log\left((1-t)I+tA^{-1/2}BA^{-1/2}\right)A^{1/2} \le t(B-A),
	$$
	which includes the known inequality (see e.g.,\cite[Eq.(7.3.1)]{8}), $S(A|B)\le B-A$ when $t=1$.
	\item[(ii)] The inequalities \eqref{cor_5_eq01} is rewitten as
			\begin{equation}\label{cor_5_eq02}
		\exp \left( \left(\dfrac{b-a}{a\nabla_t b}  -\log \frac{b}{a}\right)t \right) 
		\le \frac{a{{\nabla }_{t}}b}{a{{\sharp }_{t}}b} 
		\le \exp \left( \left( \frac{b}{a}-1-\log \frac{b}{a}    \right)t \right).
		\end{equation}
		Since we have $\log x \le x-1$ for all $x>0$, $\exp \left( \left( \frac{b}{a}-1-\log \frac{b}{a}    \right)t \right)\ge 1$ for $a,b \ge 0$ and $0\le t \le 1$, and the second inequality of \eqref{cor_5_eq02} gives the reverse of the Young's inequality. On the other hand, the function $\varphi(t,x):=\dfrac{x-1}{(1-t)+tx}-\log x$ takes both positive and negative values. For example $\varphi(1/4,4)\simeq 0.327991$ and $\varphi(1/2,4)\simeq -0.186294$. Therefore the first inequality of  \eqref{cor_5_eq02} does not always give the refinement of the Young's inequality. However, we say that the first inequality of  \eqref{cor_5_eq02}  gives the refinement of the Young's inequality for $a \ge b >0$ and $0\le t \le 1/2$. Indeed, the function $\varphi(t,x)$ is monotone decreasing in $t\in [0,1]$ by
		$\dfrac{d\varphi(t,x)}{dt}=-\dfrac{(x-1)^2}{\left\{(1-t)+tx\right\}^2}\le 0$. If $0\le t \le 1/2$ and $0<x \le 1$, then we have $\varphi(t,x)\ge \varphi(1/2,x)=\dfrac{2(x-1)}{x+1}-\log x \ge 0$ since we have $\dfrac{x-1}{\log x}\le \dfrac{x+1}{2}$ for all $x>0$.
	\end{itemize}
	\end{remark}
			
\begin{proposition}{\bf (\cite[Theorem 1]{IIKTW2012})}
For positive operators $A,B$ and $p>0$, we have
$S(A|B)\le T_p(A|B)\le S_p(A|B)$
	 and their reverse inequalities hold when $p<0$.
\end{proposition}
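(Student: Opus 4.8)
The plan is to reduce all three relative operator entropies to the scalar level. Put $X:=A^{-1/2}BA^{-1/2}$, which is strictly positive because $A,B>0$; then $S(A|B)=A^{1/2}(\log X)A^{1/2}$, $T_p(A|B)=A^{1/2}(\ln_p X)A^{1/2}$ and $S_p(A|B)=A^{1/2}(X^p\log X)A^{1/2}$. Since a congruence $Y\mapsto A^{1/2}YA^{1/2}$ preserves the operator order, and since $\phi_1\le\phi_2$ on the spectrum of a self-adjoint operator $X$ implies $\phi_1(X)\le\phi_2(X)$ by the continuous functional calculus, it is enough to prove the pointwise scalar inequalities
\[
\log x\ \le\ \frac{x^{p}-1}{p}\ \le\ x^{p}\log x\qquad(x>0,\ p>0),
\]
together with the reverse chain for $p<0$.

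Both scalar inequalities come straight from the elementary bound \eqref{8}, namely $1-\tfrac1y\le\log y\le y-1$ for $y>0$, applied with $y=x^{p}$. The right half $\log y\le y-1$ gives $p\log x\le x^{p}-1$; dividing by $p$ yields $\log x\le\ln_p x$ when $p>0$ and the reverse when $p<0$. (For $p>0$ this first inequality is also immediate from Lemma \ref{10}, where $\ln_t x$ is shown to be increasing in $t$ with $\lim_{t\to0}\ln_t x=\log x$.) The left half $1-\tfrac1y\le\log y$ gives $1-x^{-p}\le p\log x$; multiplying by $x^{p}>0$ gives $x^{p}-1\le p\,x^{p}\log x$, and dividing by $p$ yields $\ln_p x\le x^{p}\log x$ when $p>0$ and the reverse when $p<0$.

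Finally I would substitute $x\mapsto X$ in these scalar inequalities, apply the functional calculus, and conjugate by $A^{1/2}$ to obtain $S(A|B)\le T_p(A|B)\le S_p(A|B)$ for $p>0$ and the reversed chain for $p<0$. There is no serious obstacle here; the only thing to watch is the bookkeeping of inequality directions, since dividing by $p$ flips each inequality exactly when $p<0$, and one should note in passing that $S_p(A|B)$ makes sense for negative $p$ precisely because $X>0$ allows $X^{p}$ for every real exponent.
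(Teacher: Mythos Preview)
Your argument is correct. The reduction to the scalar chain $\log x\le \ln_p x\le x^{p}\log x$ (with reversal for $p<0$) via the functional calculus and the congruence $Y\mapsto A^{1/2}YA^{1/2}$ is exactly the right framework, and your derivation of both scalar inequalities from \eqref{8} with $y=x^{p}$ is clean and complete.

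The paper's proof arrives at the same scalar inequalities by a different route: it instantiates Corollary~\ref{corollary_9} (itself a consequence of Theorem~\ref{2} on geometrically convex functions) at $t=1$ to obtain
\[
b\Bigl(\log\tfrac{a}{b}\Bigr)\frac{g'(b)}{g(b)}\ \le\ \log\frac{g(a)}{g(b)}\ \le\ a\Bigl(\log\tfrac{a}{b}\Bigr)\frac{g'(a)}{g(a)},
\]
and then specializes to the geometrically convex function $g(x)=\exp(x^{p})$, which yields $b^{p}\log\tfrac{a}{b}\le \ln_p a-\ln_p b\le a^{p}\log\tfrac{a}{b}$ for $p>0$ (reversed for $p<0$); setting $b=1$ recovers your scalar chain. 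So the paper deliberately routes the proof through its new machinery as an illustration of Theorem~\ref{2}, whereas you bypass that and appeal only to the elementary bound~\eqref{8}. Your approach is shorter and self-contained; the paper's approach has the expository advantage of exhibiting the proposition as a direct application of the geometric-convexity framework developed in Section~\ref{sec2}.
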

\begin{proof}
	Putting $t=1$ in \eqref{theorem_9_eq01}, we have
	$$
	b\left(\log\frac{a}{b}\right)\frac{g'(b)}{g(b)}\le \log\frac{g(a)}{g(b)}\le a\left(\log\frac{a}{b}\right)\frac{g'(a)}{g(a)}.
	$$
	 Taking a geometrically convex function $g(x):=\exp\left(x^p\right),\,\,\,(p\neq 0)$ in the above as an example, we have
	 $$
	 b^p\left(\log\frac{a^p}{b^p}\right)\le a^p-b^p\le a^p\left(\log\frac{a^p}{b^p}\right),\,\,\,(a,b>0,\,\,p\neq 0),
	 $$
	 which implies
	 $$
	 b^p\left(\log\frac{a}{b}\right)\le \ln_p a-\ln_p b\le a^p\left(\log\frac{a}{b}\right),\,\,\,(a,b>0,\,\,p>0)
	 $$
	 and
	 $$
	 	 b^p\left(\log\frac{a}{b}\right)\ge \ln_p a-\ln_p b\ge a^p\left(\log\frac{a}{b}\right),\,\,\,(a,b>0,\,\,p<0).
	 $$
	 The above inequalities give the relations among relative operator entropies for $A, B\ge 0$:
	 $$
	 S(A|B)\le T_p(A|B)\le S_p(A|B),\,\,\,(p>0)
	 $$
	 and
	 $$
	 S(A|B)\ge T_p(A|B)\ge S_p(A|B),\,\,\,(p<0)
	 $$
	 by  taking $b=1$, $a=x=:A^{-1/2}BA^{-1/2}$ and multiplying $A^{1/2}$ to both sides.
		\end{proof}

		 We have the following results on the bounds of the	Tsallis relative operator entropy 	$T_t(A|B)$. The following application of Corollary \ref{Added_cor01} is a simple case such as $f(a)=g(a)$ and $f(b)=g(b)$, namely	$\dfrac{f\left( b \right)-f\left( a \right)}{g\left( b \right)-g\left( a \right)}=1$.
		 \begin{theorem}\label{TROE_result01}
		 For two positive operators $A,B$ such that $mA\le B \le MA$, where $1\le m < M$,
		 we have
		 \begin{equation}\label{TROE_result01_ineq01}
		 T_t(A|B)\le \left(\frac{\ln_tM-\ln_tm}{M-m}\right)B+\left(\frac{M\ln_tm-m\ln_tM}{M-m}\right)A,\quad (t\le 1,\,\,t\neq 0).
		 \end{equation}
		 We also have the reverse inequality of \eqref{TROE_result01_ineq01} for $t \ge 1$.
		 \end{theorem}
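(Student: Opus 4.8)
The plan is to reduce the operator inequality \eqref{TROE_result01_ineq01} to a scalar chord inequality for the function $\ln_t x$ on the interval $[m,M]$, and then transport it to operators via the standard functional-calculus substitution used throughout Section 3. First I would record the key analytic fact from Lemma \ref{10}: for $t\le 1$ (with $t\neq 0$) the function $x\mapsto \ln_t x$ is concave on $x\ge 1$ (indeed $\tfrac{d^2}{dx^2}\ln_t x = (t-1)x^{t-2}\le 0$ there), and convex for $t\ge 1$. Hence on the interval $[m,M]$ with $1\le m<M$, concavity of $\ln_t x$ says that the graph lies above its chord joining $(m,\ln_t m)$ and $(M,\ln_t M)$; that is, for all $x\in[m,M]$,
\begin{equation*}
\ln_t x \ge \frac{\ln_t M-\ln_t m}{M-m}\,(x-m)+\ln_t m
= \left(\frac{\ln_t M-\ln_t m}{M-m}\right)x+\frac{M\ln_t m-m\ln_t M}{M-m}.
\end{equation*}
Wait — for the stated inequality \eqref{TROE_result01_ineq01} to come out with $T_t(A|B)$ on the \emph{smaller} side, I actually need the chord to dominate the function, i.e. the reverse. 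Re-examining: since $T_t(A|B)=A^{1/2}\ln_t(A^{-1/2}BA^{-1/2})A^{1/2}$ and the substitution preserves the direction of a scalar operator inequality under congruence, what I really want on $[m,M]$ is $\ln_t x \le (\text{chord})$, which is exactly \emph{convexity}; so the roles are: for $t\le 1$ one uses concavity to get the reverse, and for $t\ge 1$ convexity gives \eqref{TROE_result01_ineq01}. I would therefore double-check the concave/convex bookkeeping against the claimed ranges of $t$ and, if needed, phrase it as: the chord inequality $\ln_t x\le \alpha x+\beta$ on $[m,M]$ holds precisely when $\ln_t$ is convex there, and this matches the ``reverse inequality for $t\ge 1$'' clause while \eqref{TROE_result01_ineq01} itself corresponds to concavity, i.e. $t\le 1$.

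Granting the correct scalar chord inequality on $[m,M]$, say $\ln_t x \le \alpha x + \beta$ with $\alpha=\frac{\ln_tM-\ln_tm}{M-m}$ and $\beta=\frac{M\ln_tm-m\ln_tM}{M-m}$, the next step is the operator passage. Put $X:=A^{-1/2}BA^{-1/2}$. The hypothesis $mA\le B\le MA$ is equivalent, after congruence by $A^{-1/2}$, to $m\mathbf 1_{\mathcal H}\le X\le M\mathbf 1_{\mathcal H}$, so the spectrum of $X$ lies in $[m,M]$ and the scalar inequality applied through the continuous functional calculus gives $\ln_t(X)\le \alpha X+\beta\mathbf 1_{\mathcal H}$ as operators. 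Finally, congruence by $A^{1/2}$ (which preserves operator order) yields
\begin{equation*}
A^{1/2}\ln_t(X)A^{1/2}\le \alpha\,A^{1/2}XA^{1/2}+\beta\,A^{1/2}\mathbf 1_{\mathcal H}A^{1/2},
\end{equation*}
and recognizing $A^{1/2}\ln_t(X)A^{1/2}=T_t(A|B)$, $A^{1/2}XA^{1/2}=B$, $A^{1/2}\mathbf 1_{\mathcal H}A^{1/2}=A$, we obtain exactly \eqref{TROE_result01_ineq01} (with the reversed inequality when the scalar inequality reverses, i.e. on the convex range). One can also phrase this as a direct instance of Corollary \ref{Added_cor01} with $f=g=\ln_t$ on $[1,\infty)$ in the degenerate case where the ratio $\frac{f(b)-f(a)}{g(b)-g(a)}=1$, as the paragraph preceding the theorem suggests; then the content is just the affine-upper-bound (tangent/chord) step, which is the convexity statement above.

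The only real obstacle is the sign bookkeeping: making sure which of $t\le 1$ and $t\ge 1$ gives the chord-above-graph inequality (needed for \eqref{TROE_result01_ineq01}) versus chord-below-graph (the reverse), and handling the excluded value $t=0$ correctly (where $\ln_t x\to\log x$, which is concave on $[1,\infty)$, consistent with the $t\le 1$ case). I would also note the nondegeneracy $m<M$ is used so the chord slope $\alpha$ is well defined. Everything else — the equivalence $mA\le B\le MA \iff m\mathbf 1_{\mathcal H}\le A^{-1/2}BA^{-1/2}\le M\mathbf 1_{\mathcal H}$, functional calculus monotonicity, and congruence invariance of operator order — is standard and already used repeatedly in this section, so no further machinery is needed.
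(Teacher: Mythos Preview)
Your approach --- reduce to the scalar chord inequality for $\ln_t$ on $[m,M]$ and then pass to operators via $X=A^{-1/2}BA^{-1/2}$ and congruence by $A^{1/2}$ --- is exactly the paper's approach: the paper invokes Corollary~\ref{Added_cor01} with $f=\ln_t$ and $g$ equal to the secant line through $(m,\ln_t m)$ and $(M,\ln_t M)$, so that $f(m)=g(m)$, $f(M)=g(M)$ and the ratio $(f(b)-f(a))/(g(b)-g(a))$ collapses to $1$.

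However, the ``sign bookkeeping'' you flag and then set aside is not cosmetic; it is a genuine obstruction, and you cannot simply ``grant the correct scalar inequality'' and proceed. For $t\le 1$, $t\ne 0$, you correctly compute $\tfrac{d^2}{dx^2}\ln_t x=(t-1)x^{t-2}\le 0$, so $\ln_t$ is concave on $[m,M]\subset[1,\infty)$ and its graph lies \emph{above} the chord: $\ln_t x\ge \alpha x+\beta$ on $[m,M]$. Specialising \eqref{TROE_result01_ineq01} to $A=\mathbf 1_{\mathcal H}$, $B=x\mathbf 1_{\mathcal H}$ forces the opposite scalar inequality $\ln_t x\le\alpha x+\beta$; a concrete check with $t=\tfrac12$, $m=1$, $M=4$, $x=2.25$ gives $\ln_{1/2}(2.25)=1>\tfrac{2}{3}(2.25-1)\approx 0.833$. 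Thus \eqref{TROE_result01_ineq01} as printed cannot hold for $t\le 1$; the two ranges of $t$ in the statement are interchanged. The paper's own argument does not escape this: its appeal to Corollary~\ref{Added_cor01} rests on the hypotheses of Theorem~\ref{concave_and_convex_lemma}, and hypothesis~(iv) there reads, with these choices, $\ln_t M-\ln_t m\ge \dfrac{\ln_t M}{\ln_t m}\,(\ln_t M-\ln_t m)$, i.e.\ $\ln_t m\ge \ln_t M$, which is false for $m<M$. Your chord argument is the right one, but it actually yields $T_t(A|B)\ge \alpha B+\beta A$ for $t\le 1$ (concave case) and the displayed inequality \eqref{TROE_result01_ineq01} for $t\ge 1$ (convex case), not the other way around.
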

		 
		 \begin{proof}
		 If we take $f(x)=\ln_t x$, $(t\le 1,\,\,t\neq 0)$ and $g(x)=\left(\frac{\ln_tM-\ln_tm}{M-m}\right)(x-m)+\ln_tm$ on $[m,M]$ where $m\ge 1$, then these functions satisfy the conditions (i)--(iv) in Theorem \ref{concave_and_convex_lemma}. Since $1\le m \le A^{-1/2}BA^{-1/2}\le M$, we have the inequality \eqref{TROE_result01_ineq01}, by Corollary \ref{Added_cor01}.
		 		 Since the functions $f(x)=\left(\frac{\ln_tM-\ln_tm}{M-m}\right)(x-m)+\ln_tm$ and $g(x)=\ln_t x$, $(t\ge 1)$ on $[m,M]$ where $m \ge 1$ also satisfy the conditions (i)--(iv) in Theorem \ref{concave_and_convex_lemma}, we then have the reverse inequality of \eqref{TROE_result01_ineq01} for $t \ge 1$, by Corollary \ref{Added_cor01}.
		 \end{proof}
		 \begin{remark}
		 \begin{itemize}
		 \item[(i)] In \eqref{TROE_result01_ineq01}, we take $m=1$, then we have
		 \begin{equation}\label{TROE_result01_ineq02}
		 T_t(A|B)\le \frac{\ln_tM}{M-1}(B-A)\le B-A,\,\,\,\,(t \le 1,\,\, t\neq 0,\,\,M\geq 1).
		 \end{equation}
		 The last inequality holds since we have $\ln_t x \le x-1$,  $(x>0,\,\,t\le 1,\,\,t\neq 0)$.
		 Therefore the inequality \eqref{TROE_result01_ineq02} give an refinement for the upper bound of  the 	Tsallis relative operator entropy 	$T_t(A|B)$ given in \eqref{Zou_ineq}.
		 We also have
		 $$
		 T_t(A|B)\ge  \frac{\ln_tM}{M-1}(B-A) \ge  B-A,\,\,\,\,(t \ge 1, \,\,M\geq 1),
		 $$
		 since we have the inequality $\ln_tx \ge x-1$, $(x>0,\,\,t\ge 1)$.
		 Further, we put $M=e$ and take a limit $t\to 0$ in \eqref{TROE_result01_ineq02}, we have
		 the slightly refined upper bound for  the relative operator entropy 	$S(A|B)$: 
		 $$
		 S(A|B)\le\frac{1}{e-1}(B-A)\le B-A.
		 $$
		 \item[(ii)]	Since the function $f$ is concave in Theorem \ref{concave_and_convex_lemma}, we have
					$$(g(b) - g(a)) (\langle f(A)h , h \rangle ) \leq (f(b) - f(a)) \langle g(A)h, h \rangle$$
					by  Lemma \ref{unproved_lemma}.
					We take the functions $f(x):=\log x$ and $g(x):=\dfrac{x-1}{e-1}$ in the interval $[1,e]$ to satisfy the conditions in  Theorem \ref{concave_and_convex_lemma}. Then we have
					$$
					\langle \left(\log A\right) h,h\rangle \le \langle \left(\frac{A-I}{e-1}\right)h,h\rangle
					$$
					for any unit vector $h\in {\cal H}$. Thus we have the operator inequality $\log A \le \dfrac{A-{{\mathbf 1}_{\mathcal H}}}{e-1}$ for ${{\mathbf 1}_{\mathcal H}}\le A \le e{{\mathbf 1}_{\mathcal H}}$.
		 \end{itemize}
		 \end{remark}
	
\section*{Acknowledgement}
The author (S.F.) was partially supported by JSPS KAKENHI Grant Number 21K03341.

\end{document}